\newtheorem{theorem}{Theorem}[section]
\newtheorem{lemma}[theorem]{Lemma}
\newtheorem{proposition}[theorem]{Proposition}
\newtheorem{conjecture}[theorem]{Conjecture}
\newtheorem*{claim}{Claim}
\newtheorem{question}[theorem]{Question}
\theoremstyle{definition}
\newtheorem{definition}[theorem]{Definition}
\theoremstyle{remark}
\newtheorem{remark}[theorem]{Remark}
\newtheorem{example}[theorem]{Example}
\newtheorem{case}{Case}
\newcommand{\qedSymC}{\hfill$\lhd\\$}
\let\oldfootnote\footnote
\def\footnote{\@ifstar\footnote@star\footnote@nostar}
\def\footnote@star#1{{\let\thefootnote\relax\footnotetext{#1}}}
\def\footnote@nostar{\oldfootnote}
\DeclareMathOperator{\print}{pr}
\begin{document}

\title[Answering Two Conjectures About Parikh Matrices]{Ternary is Still Good for Parikh Matrices}
\author{Robert Merca{\c s}}
\address{ Department of Computer Science\\
	Loughborough University\\  
	United Kingdom}
\email{R.G.Mercas@lboro.ac.uk}
\author{Wen Chean Teh$^*$}
\address{School of Mathematical Sciences\\
		Universiti Sains Malaysia\\
		11800 USM, 
		Malaysia}
\email{dasmenteh@usm.my}

\keywords{Parikh matrices, distinguishability, ambiguity, Hamming distance, $M$\!-equivalence, amiability}

\begin{abstract}
The focus of this work is the study of Parikh matrices with emphasis on two concrete problems. In the first part of our presentation we show that a conjecture by Dick at al.\ in 2021 only stands in the case of ternary alphabets, while providing counterexamples for larger alphabets. In particular, we show that the only type of distinguishability in the case of 3-letter alphabets is the trivial one. The second part of the paper builds on the notion of Parikh matrices for projections of words, discussed initially in this work, and answers, once more in the case of a ternary alphabet, a question posed by Atanasiu et al.\ in 2022 with regards to the minimal Hamming distance in between words sharing a congruency class.
\end{abstract}

\maketitle

\footnote*{$^*$Corresponding author}

\section{Introduction}

{\em Parikh vectors} are by now a classical representation for data. They are in effect arrays of dimension equal to the size of the used alphabet in which every position gives the number of occurrences of the corresponding symbol of the alphabet, in the investigated word. Parikh vectors are computed in linear time, but they are also guaranteed to be logarithmic in the size of the sequence they represent. However they are ambiguous, i.e., multiple words typically share the same Parikh vector. Hence, they cannot be used for lossless data compression. 

Since the Parikh mapping returns, for a given sequence, a vector with the multiplicity of each symbol, later has been proposed a refinement of this mapping which has as result a matrix~\cite{mateescu2000extension}. {\em Parikh matrices} were meant to reduce the ambiguity that the vectors manifest. Therefore, they do not only contain the Parikh vector associated with the word, but also information regarding some of the word's {\em (scattered) subsequences}. In particular, if the symbols of the alphabet were to be concatenated together in lexicographically increasing order as to create \emph{an alphabet word}, the Parikh matrix associated with a given word would provide the multiplicities of all  subsequences of the underlying word which appear as (contiguous) factors in the alphabet word. A subsequence is the result of selecting from a word symbols, in the order of their appearance, without them being necessarily adjacent.

Such a matrix has the same asymptotic complexity as the vector with respect to the size of the considered word, while it is associated to only a significantly smaller number of words. However, it does not normally remove ambiguity entirely, i.e., more than one word can be associated with such a matrix. Two or more words associated with the same Parikh matrix are said to be \emph{amiable}, or \emph{$M$\!-equivalent}, while all words amiable among themselves form an equivalence class. 
Furthermore, while in the case of Parikh vectors the mapping is surjective, this is not the case with Parikh matrices; there exist matrices which are not Parikh, i.e., no word associated to such a matrix can be constructed.

Most of the investigation related to Parikh matrices concerned precisely the ambiguity that the mapping defining these matrices displays. Further refined versions of the concept focused on producing structures that would lead to a reduction of the ambiguity~\cite{alazemi2013several,bera2016algebraic,eugeciouglu2006q}; the methodology encompasses the inclusion of polynomials, various extensions on the mappings, or both. There has also been work on uniquely identifying Parikh matrices. However, due to the difficulty of the problem the majority of the results are either considering binary~\cite{atanasiu2007binary,fosse2004some} or ternary~\cite{atanasiu2014parikh,atanasiu2008parikh, mahalingam2012product,poovanandran2024counting,poovanandran2018elementary, serbanuta2009parikh} alphabets, while for alphabets of higher order the question is rather unexplored. Relative to a reduction of the ambiguity of the Parikh matrices, older work restricted to either considering an alternative order on the alphabet~\cite{atanasiu2002injectivity,poovanandran2019strong} or the additional consideration of the Parikh mapping of the reverse image of the word~\cite{atanasiu2002injectivity}.

An understudied aspect that may reduce the ambiguity of a Parikh matrix concerns the information acquired by altering the word itself, or considering other alterations of the alphabet. With respect to this, the recent work of~\cite{dick2021reducing} presents two different methods that reduce the ambiguity of the original Parikh matrices in the form of $\mathbb{P}$-Parikh matrices and $\mathbb{L}$-Parikh matrices. The former represents a restricted case of Parikh matrices induced by words~\cite{serbanuta2004extending}, where the word is a subsequence of the initial alphabet word. The latter variations of Parikh matrices are the classical ones associated with a specific transformation of the initial word. For both methods, the idea is to use, besides the Parikh matrix associated with the word, another specific matrix as to obtain a reduction in the ambiguity of the original underlying word.

A classical distance, used to quantify how distinguishable sequences are, is the Hamming one. In particular, the Hamming distance in between any two sequences of equal length is given by the number of different symbols, in between the words, at every corresponding position. In~\cite{atanasiu2022erasure}, motivated by a similarity in the investigation of words within an $M$\!-equivalence class to that of the notion of error-correcting codes, the authors make an analysis of Hamming distances in between amiable words. For both binary and ternary alphabets the authors show that there (almost) always exist words that are arbitrarily far away from each other while being $M$\!-equivalent. Furthermore, within each equivalence class of binary words there always exist two that are at Hamming distance $2$ of each other, and no smaller such minimal distance is possible.

\noindent\emph{Main results:} 
In this work we make a more in-depth analysis of the structure that the projections' words of amiable sequences exhibit. To this end, we show that for ternary alphabets amiable words are distinguishable via downwards projections if and only if such projections are trivially deducible, i.e., the considered words contain length two factors that consist of letters not consecutive in the ordered alphabet. Buiding on the notion of distinguishability as an outcome of projections from ternary alphabets, we are also able to show that, within every $M$\!-equivalence class of $3$-letter words at least two words have their Hamming distance as one of the values $\{2,4,7,8\}$.

Our paper is structured as follows. The next section features the preliminaries with respect to the notions and results that we elaborate. Section~\ref{sec:3} answers a conjecture on $\mathbb{P}$-distinguishability by answering it positively only for the ternary alphabet case. Building on the notion of $\mathbb{P}$-distinguishability, in Section~\ref{sec:4} we answer a seemingly unconnected question on the minimal Hamming distance exhibited by a class of $M$\!-equivalent words. We conclude this work with some future observations and suggestions of questions to be approached.

\section{Preliminaries}

Let $\Sigma=\{a_1, a_2, \dotsc,a_s\}$ be a finite nonempty alphabet. The set of all words over $\Sigma$ is denoted by $\Sigma^*$. 
If $\Sigma$ comes with a total ordering $a_1<a_2<\dotsb < a_s$, then we say that $\Sigma$ is an \emph{ordered alphabet} and we write $\Sigma= \{a_1<a_2<\dotsb<a_s\}$. 
In this work, all of the considered alphabets are ordered. 
If $w=a^{p_1}_{i_1}a^{p_2}_{i_2}\dotsm a^{p_n}_{i_n}$, for some 
$a_{i_1}, a_{i_2}, \dotsc, a_{i_n}\in\Sigma^*$ and positive integers $p_1, p_2, \dotsc, p_n$ such that $a_{i_j}\neq a_{i_{j+1}}$, for all $1\le j\le n-1$, then the \textit{print} of $w$, denoted by $\print(w)$, is the word $a_{i_1}a_{i_2}\cdots a_{i_n}$. For $S\subseteq\Sigma$, the \emph{projective morphism} from 
$\Sigma^*$ into $S^*$ is denoted by $\pi_{S}$.
For example, $\pi_{ \{a,c\}} (bacdabc)= acac$, and, for simplicity, we write $\pi_{a,c}( bacdabc)$.

A word $v$ is a \emph{(scattered) subsequence} of $w\in \Sigma^*$ if there exist words $x_1,x_2,\ldots,$ $ x_n, y_0,y_1,\ldots,y_n\in \Sigma^*$ such that $w=y_0x_1y_1\dotsm y_{n-1}x_ny_n$ and $v=x_1x_2\dotsm x_n$.  The number of distinct occurrences of a word $v$ as a subsequence of $w$ is denoted by $|w|_v$. For example, $|aababb|_{ab}=8$ and $|bcacabcba|_{abc}=2$.

\subsection{Parikh Matrices}

For any integer $s\geq 2$, let $\mathcal{M}_s$ denote the multiplicative monoid of $s \times s$ upper triangular matrices with nonnegative integer entries and ones in the principal diagonal.

\begin{definition}[\cite{mateescu2001sharpening}]
	Let $\Sigma=\{a_1<a_2<\cdots <a_s\}$. The \textit{Parikh matrix mapping} with respect to $\Sigma$, denoted $\Psi_\Sigma$, is a morphism from $\Sigma^*$ into $\mathcal{M}_{s+1}$
	defined by:  for every integer $1\le q\le s$, 
	\begin{itemize}
		\item $ [\Psi_\Sigma(a_q) ]_{i, i}=1$, for all $1\le i\le s+1$;
		\item $ [\Psi_\Sigma(a_q) ]_{q, q+1} =1$; and 
		\item all other entries of the matrix $\Psi_\Sigma(a_q)$ are zero; and  
	\end{itemize} 
for every word $w=a_{q_1}a_{q_2}\dotsm a_{q_{\vert w\vert}}\in  \Sigma^*$, we associate the matrix \[\Psi_{\Sigma}(w)= \Psi_{\Sigma}( a_{q_1}) \Psi_{\Sigma}( a_{q_1}) 
\dotsm \Psi_{\Sigma}( a_{q_{\vert w\vert}}).\]
	Matrices of the form $\Psi_\Sigma(w)$, for $w\in\Sigma^*$, are termed as  \textit{Parikh matrices}.
\end{definition}

\begin{theorem}[\cite{mateescu2001sharpening}]\label{1206a}
	Let $\Sigma=\{a_1<a_2< \dotsb<a_s\}$ and  $w\in \Sigma^*$. The matrix $\Psi_{\Sigma}(w)$ has the following properties:
	\begin{itemize}
		\item $[\Psi_\Sigma(w) ]_{i, i}=1$, for each $1\leq i \leq s+1$;
		\item $[\Psi_\Sigma(w) ]_{i, j}=0$, for each $1\leq j<i\leq s+1$;
		\item $[\Psi_\Sigma(w) ]_{i, j+1}=\vert w \vert_{a_ia_{i+1}\dotsm a_j    }$, for each $1\leq i\leq j \leq s$.
	\end{itemize}
\end{theorem}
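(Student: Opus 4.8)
The plan is to establish the three bullet points separately: the first two are structural and essentially free, while the third is proved by induction on $|w|$.

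First I would observe that every generator matrix $\Psi_\Sigma(a_q)$ belongs to $\mathcal{M}_{s+1}$ — it is upper triangular with $1$'s on the principal diagonal — and that $\mathcal{M}_{s+1}$ is a monoid under multiplication, hence closed under products. Consequently $\Psi_\Sigma(w)\in\mathcal{M}_{s+1}$ for every $w\in\Sigma^*$, which yields $[\Psi_\Sigma(w)]_{i,i}=1$ and $[\Psi_\Sigma(w)]_{i,j}=0$ for $j<i$ simultaneously; no real work is needed here.

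For the third identity I would induct on $|w|$. The base case $w=\lambda$ (the empty word) is immediate: $\Psi_\Sigma(\lambda)=I_{s+1}$, whose $(i,j+1)$ entry vanishes whenever $i\le j$, matching $|\lambda|_{a_ia_{i+1}\dotsm a_j}=0$. For the inductive step I would write $w=w'a_q$, so that $\Psi_\Sigma(w)=\Psi_\Sigma(w')\,\Psi_\Sigma(a_q)$, and expand the $(i,j+1)$ entry via matrix multiplication. Since $\Psi_\Sigma(a_q)$ differs from the identity only in its $(q,q+1)$ entry, the sum collapses: one always picks up the term $[\Psi_\Sigma(w')]_{i,j+1}$, and when $q=j$ one additionally picks up $[\Psi_\Sigma(w')]_{i,j}$. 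Applying the induction hypothesis — and the already-proved diagonal-ones property to handle the degenerate index $i=j$ — the right-hand side becomes $|w'|_{a_i\dotsm a_j}$ when $q\neq j$, and $|w'|_{a_i\dotsm a_j}+|w'|_{a_i\dotsm a_{j-1}}$ when $q=j$. To finish I would match this against the elementary append recurrence for scattered-subsequence counts: appending the letter $a_q$ to $w'$ creates a new occurrence of $a_i a_{i+1}\dotsm a_j$ exactly when $q=j$ and the occurrence ends at the new final position, and such occurrences are in bijection with occurrences of the prefix $a_i\dotsm a_{j-1}$ inside $w'$. Hence $|w|_{a_i\dotsm a_j}$ equals $|w'|_{a_i\dotsm a_j}$ if $q\neq j$ and $|w'|_{a_i\dotsm a_j}+|w'|_{a_i\dotsm a_{j-1}}$ if $q=j$, which closes the induction.

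The only genuinely fiddly point is the boundary case $i=j$ in the $q=j$ branch, where the ``prefix'' $a_i\dotsm a_{j-1}$ degenerates to the empty word; there one must read $[\Psi_\Sigma(w')]_{i,j}=[\Psi_\Sigma(w')]_{i,i}=1$ and, correspondingly, account for the single new occurrence of the one-letter subsequence $a_i$. Once these conventions are pinned down, the argument is just a careful but routine combination of the matrix-product expansion with the standard subsequence recurrence, so I do not anticipate any serious obstacle.
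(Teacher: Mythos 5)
Your argument is correct: the first two bullets follow from closure of $\mathcal{M}_{s+1}$ under multiplication, and your induction on $|w|$ correctly combines the collapse of the product $\Psi_\Sigma(w')\Psi_\Sigma(a_q)=\Psi_\Sigma(w')(I+E_{q,q+1})$ with the append recurrence $|w'a_q|_{a_i\dotsm a_j}=|w'|_{a_i\dotsm a_j}+[q=j]\,|w'|_{a_i\dotsm a_{j-1}}$, including the degenerate case $i=j$ where $|w'|_\lambda=1$ matches the diagonal entry. The paper imports this theorem by citation without proof, and your argument is precisely the standard one from the cited source, so there is nothing to compare beyond confirming it is sound.
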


\begin{example}
	Let $\Sigma=\{a<b<c\}$ and $w=abcabbc$. Then
	\begin{align*}
	\Psi_{\Sigma}(w)&=\Psi_{\Sigma}(a)\Psi_{\Sigma}(b)\Psi_{\Sigma}(c)\Psi_{\Sigma}(a)\Psi_{\Sigma}(b)\Psi_{\Sigma}(b)\Psi_{\Sigma}(c)\\
	&=\begin{psmallmatrix}
	1 & 1 & 0 & 0 \\
	0 & 1 & 0 & 0\\
	0 & 0 & 1 & 0\\
	0 & 0 & 0 & 1
	\end{psmallmatrix}
	\begin{psmallmatrix}
	1 & 0 & 0 & 0 \\
	0 & 1 & 1 & 0\\
	0 & 0 & 1 & 0\\
	0 & 0 & 0 & 1
	\end{psmallmatrix}
	\dotsm
	\begin{psmallmatrix}
	1 & 0 & 0 & 0 \\
	0 & 1 & 0 & 0\\
	0 & 0 & 1 & 1\\
	0 & 0 & 0 & 1
	\end{psmallmatrix}
	=\begin{psmallmatrix}
	1 & 2 & 5 & 6 \\
	0 & 1 & 3 & 4\\
	0 & 0 & 1 & 2\\
	0 & 0 & 0 & 1
	\end{psmallmatrix}
	=\begin{psmallmatrix}
	1 & \vert w\vert_a & \vert w\vert_{ab} & \vert w\vert_{abc} \\
	0 &1 & \vert w\vert_b & \vert w\vert_{bc}\\
	0 & 0 & 1 & \vert w\vert_c\\
	0 & 0 & 0 & 1
	\end{psmallmatrix}.\hfill\lhd
	\end{align*}
\end{example}

\begin{definition}
	Let $\Sigma$ be an ordered alphabet and $w,w'\in \Sigma^*$. We say that $w$ and $w'$ are \mbox{\emph{$M$\!-equivalent}}, denoted $w\equiv_Mw'$, if $\Psi_{\Sigma}(w)=\Psi_{\Sigma}(w')$. If $w\neq w'$, i.e., $w$ is \mbox{$M$\!-equivalent} to a distinct word, then we say that $w$ is \mbox{\emph{$M$\!-ambiguous}};  otherwise, $w$ is \mbox{\emph{$M$\!-unambiguous}}. 
\end{definition}

The $M$\!-equivalence class (or simply $M$\!-class) of $w$ is denoted by $[w]$.

\begin{remark}\label{2912a}
Note that $M$\!-equivalence is both left and right invariant. In other words, if $w\equiv_M w'$, then $uwv\equiv_M uw'v$, for any $u,v\in \Sigma^*$.	
\end{remark}

The following are elementary $M$\!-equivalence preserving rules.

Let $\Sigma=\{a_1<a_2<\cdots<a_s\}$ and $w,w'\in\Sigma^*$.
\begin{itemize}
	\item[$E1$.] If $w=xa_i a_j y$ and $w'=x a_j a_i y$, for some $x,y\in\Sigma^*$ and $|i-j|\ge 2$, then $w\equiv_M w'$.
	\item[$E2$.] If $w=xa_ja_{j+1}ya_{j+1}a_jz$ and $w'=xa_{j+1}a_jya_ja_{j+1}z$, for some $x,y,z\in\Sigma^*$ and $1\leq j\leq s-1$ such that  $|y|_{a_{j-1}} =|y|_{a_{j+2}}=0$, then $w\equiv_M w'$.
\end{itemize}

\begin{theorem}[\cite{atanasiu2007binary, mateescu2004matrix}]\label{1906bb}
Let $\Sigma=\{a<b\}$ and $w, w'\in \Sigma^*$. Then $w\equiv_M w'$ if and only if $w$ can be obtained from $w'$ by finitely many applications of Rule $E_2$. As a consequence,
the following are all $M$\!-unambiguous words over $\Sigma$:
\[\lambda, a^\alpha, b^\alpha, a^\alpha b^\beta, b^\alpha a^\beta,  a^\alpha b a^\beta, b^\alpha a b ^\beta, a^\alpha bab^\beta, b^\alpha ab a^\beta, \quad \alpha, \beta \geq 1.\]
\end{theorem}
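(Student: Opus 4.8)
The plan is to prove both directions of the biconditional in Theorem~\ref{1906bb}. For the easy direction, suppose $w'$ is obtained from $w$ by one application of Rule $E_2$; then by Remark~\ref{2912a} it suffices to check $\Psi_\Sigma(a_ja_{j+1}ya_{j+1}a_j) = \Psi_\Sigma(a_{j+1}a_jya_ja_{j+1})$ whenever $|y|_{a_{j-1}}=|y|_{a_{j+2}}=0$. In the binary case $\Sigma=\{a<b\}$ this reduces to checking $abyba \equiv_M bayab$ for arbitrary $y$, which by Theorem~\ref{1206a} amounts to verifying the single nontrivial entry $|abyba|_{ab} = |bayab|_{ab}$ (the counts of $a$, of $b$, and the empty-word entry are obviously invariant under permuting letters). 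This is a short combinatorial count: splitting each $ab$-occurrence according to which of the four boundary letters it uses, both sides equal $|y|_{ab} + |y|_a + |y|_b + 2$. An induction on the number of applications then gives $w\equiv_M w'$.

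For the hard direction — that $M$-equivalent binary words are connected by $E_2$-moves — I would argue by induction on word length, or equivalently on $|w|_{ab}$. Given $w\equiv_M w'$ with $w\neq w'$, the key structural fact to establish is a normal form: every binary word can be transformed by $E_2$-moves into a canonical representative of its $M$-class, and two words share a class iff they have the same canonical form. A natural candidate normal form is the lexicographically least word in the class (say with $a<b$). The main work is to show that any word not in normal form admits an $E_2$-move strictly decreasing it in some well-founded order; the pattern $\cdots ba \cdots ab \cdots$ with nothing forbidden in between is exactly what one needs, and the constraint $|y|_{a_{j-1}}=|y|_{a_{j+2}}=0$ is vacuous when $\Sigma$ has only two letters, so $E_2$ here is simply: replace a factor $ba\,y\,ab$ by $ab\,y\,ba$. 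One then shows that a word on which no such move applies must already be among the listed forms $\lambda, a^\alpha, b^\alpha, a^\alpha b^\beta, b^\alpha a^\beta, a^\alpha b a^\beta, b^\alpha a b^\beta, a^\alpha b a b^\beta, b^\alpha a b a^\beta$, and that these are pairwise $M$-inequivalent by comparing the entries $|w|_a, |w|_b, |w|_{ab}$ directly.

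The consequence — that the listed words are $M$-unambiguous — then follows because each of these normal forms is the unique word in its class: no $E_2$-move applies to any of them (each contains at most one "descent" $ba$ and at most one "ascent" $ab$, never the $ba\cdots ab$ configuration required), so by the first part of the theorem the class is a singleton.

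The main obstacle I expect is the normal-form argument in the hard direction: proving that the $E_2$-reduction terminates at one of the nine listed shapes and not at some other irreducible word. One must rule out words like $b^\alpha a^\beta b^\gamma a^\delta$ with all exponents positive — but such a word contains the factor $a^\beta b^\gamma$ giving an $ab$ to the right of the leading $b^\alpha$-block's trailing $a$, hence a $ba\cdots ab$ pattern, so an $E_2$-move does apply. Making this case analysis exhaustive and clean — essentially showing the $ab$-count strictly drops, or that some lexicographic measure decreases, under a well-chosen $E_2$-move whenever the word has $\geq 2$ "blocks" of each letter interleaved — is where the real content lies; everything else is bookkeeping with Theorem~\ref{1206a}.
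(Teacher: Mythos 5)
The paper offers no proof of this theorem---it is imported from \cite{atanasiu2007binary, mateescu2004matrix}---so there is no in-paper argument to compare against; I can only assess your sketch on its merits. Your easy direction is correct: in the binary case the side condition of Rule $E2$ is vacuous, the rule becomes ``swap a factor $ab\,y\,ba$ with $ba\,y\,ab$,'' and your count $|ab\,y\,ba|_{ab}=|ba\,y\,ab|_{ab}=|y|_{ab}+|y|_a+|y|_b+2$ together with Remark~\ref{2912a} and induction on the number of moves settles it. The ``consequence'' is also fine once the first part is granted: a binary word admits no $E2$-move iff it has no pair of \emph{disjoint} factor occurrences of $ab$ and $ba$, and a block-decomposition argument shows these irreducible words are exactly the nine listed shapes, whence each is alone in its class.

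The genuine gap is in the hard direction. Your plan is to $E2$-reduce an arbitrary word to an irreducible one and then invoke the classification of irreducible words. But an $M$-ambiguous class need not contain any irreducible word: $[abba]=\{abba,\,baab\}$, and each of these two words admits exactly one $E2$-move, namely to the other. So ``reduce until no move applies'' never terminates at one of the nine shapes for an ambiguous class, and the classification of irreducible words yields only the consequence, not the connectivity of $M$-classes under $E2$. If you instead take the normal form to be the lexicographically least word of the $M$-class, the lemma you actually need is: \emph{every word that is not lex-least in its $M$-class contains a disjoint $ba\cdots ab$ pattern whose swap is lex-decreasing}. That lemma is essentially the whole theorem and is nowhere argued in your sketch---your case analysis (e.g.\ of $b^\alpha a^\beta b^\gamma a^\delta$) only rules out extra irreducible shapes, which again bears on the consequence. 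Your fallback measure ``the $ab$-count strictly drops'' cannot work either, since $E2$-moves preserve $|w|_{ab}$ by construction. The standard route in the cited references is a direct induction: given $w\equiv_M w'$ with $w\neq w'$, one exhibits an $E2$-move on one of them that strictly lengthens their common prefix.
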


The next technical lemma is useful in this work.

\begin{lemma}\label{1011c}
	Let $w\in \{a<b\}^*$ be an $M$\!-ambiguous word with $\vert w\vert_a\geq 1$. Then:
	\begin{enumerate}
		\item $w$ is $M$\!-equivalent to either a word that ends in $a$ or a word of the form $a^{\alpha}b^{\beta}a b^{\gamma}$, for some $\alpha, \beta\geq 0$ and $\gamma \geq 1$.
		\item $w$ is $M$\!-equivalent to either a word that starts with $a$ or a word of the form $b^{\alpha}ab^{\beta} a^{\gamma}$, for some $\alpha\geq 1$ and $\beta, \gamma \geq 0$.		
	\end{enumerate}	  
\end{lemma}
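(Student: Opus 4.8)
The plan is to prove both parts by a direct normal-form argument using Theorem~\ref{1906bb}, which characterizes binary $M$-equivalence entirely in terms of Rule $E2$, and the concrete list of $M$-unambiguous words it provides. I would treat part~(1) and part~(2) symmetrically; by the symmetry exchanging the roles of the two ends of a word (reading the word backwards exchanges prefixes with suffixes and also exchanges $E2$-moves of the shape $a_ja_{j+1}ya_{j+1}a_j \leftrightarrow a_{j+1}a_jya_ja_{j+1}$ with themselves), part~(2) follows from part~(1) applied to the reversal of $w$, so the real work is all in part~(1).

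For part~(1): since $w$ is $M$-ambiguous with $|w|_a\ge 1$, it cannot appear in the list of $M$-unambiguous binary words from Theorem~\ref{1906bb}. First I would dispose of the case $|w|_a \ge 2$. Here I claim $w$ is always $M$-equivalent to a word ending in $a$. Indeed, using $E2$-moves one can push occurrences of $a$ rightward: if $w$ already ends in $a$ we are done; otherwise write $w = u b^\gamma$ with $\gamma\ge 1$ and $u$ ending in $a$ (using $|w|_a\ge 1$). If $u$ contains a factor $ab^\delta a$ with the final $a$ being the last letter of $u$, an $E2$-move on the pattern $a b^\delta a \cdot b$ (taking $y=b^\delta$, legal since the alphabet is binary so the side-conditions $|y|_{a_{j-1}}=|y|_{a_{j+2}}=0$ are vacuous) rewrites $\dots a b^\delta a b \dots \to \dots b a b^\delta a \dots$, moving an $a$ one position to the right past the trailing $b$; iterating moves an $a$ all the way to the end. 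The only obstruction is when $u$ has the form $b^\alpha a b^\beta$ or $b^\alpha a$ — i.e. exactly one $a$ in $u$ — but that contradicts $|w|_a = |u|_a \ge 2$. So when $|w|_a\ge 2$, the first alternative in part~(1) always holds.

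It remains to handle $|w|_a = 1$. Then $w = b^\alpha a b^\beta$ for some $\alpha,\beta\ge 0$. If $\beta = 0$ then $w$ ends in $a$ and the first alternative holds. If $\beta\ge 1$ then $w = b^\alpha a b^\beta = a^0 b^\alpha a b^\beta$, which is already literally of the form $a^\alpha b^\beta a b^\gamma$ with the exponents $(\alpha_{\text{stmt}},\beta_{\text{stmt}},\gamma_{\text{stmt}}) = (0,\alpha,\beta)$ and $\gamma_{\text{stmt}} = \beta \ge 1$, so the second alternative holds. (Note that in this $|w|_a=1$ case $w$ is in fact $M$-unambiguous, so the hypothesis of $M$-ambiguity is only genuinely used to rule out the short words in the $|w|_a\ge 2$ analysis above and is otherwise harmless.) This exhausts all cases for part~(1); part~(2) follows by applying part~(1) to the reversal $\tilde w$ of $w$, noting that $\tilde w$ is $M$-ambiguous iff $w$ is (reversal is an $M$-equivalence-preserving involution by Theorem~\ref{1906bb}, since $E2$ is reversal-symmetric) and that $|\tilde w|_a = |w|_a \ge 1$, then reversing the resulting normal form: a word ending in $a$ reverses to one starting with $a$, and $a^\alpha b^\beta a b^\gamma$ reverses to $b^\gamma a b^\beta a^\alpha$, matching the stated form $b^{\alpha'} a b^{\beta'} a^{\gamma'}$ with $\alpha' = \gamma\ge 1$.

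The main obstacle is getting the rightward-pushing argument for $|w|_a\ge 2$ completely airtight: one must be careful that each $E2$-application is legal and that the process terminates with an $a$ genuinely at the last position, which is why I phrase it as moving a single $a$ past the entire trailing block $b^\gamma$ one step at a time rather than trying to normalize the whole word at once. Everything else is bookkeeping with the explicit unambiguous-word list and the reversal symmetry.
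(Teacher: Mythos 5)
There is a genuine error in the case $\vert w\vert_a\ge 2$, which is where all the content of part~(1) lies. The rewriting you invoke, $\dotsm a b^{\delta} a b \dotsm \to \dotsm b a b^{\delta} a \dotsm$ ``taking $y=b^{\delta}$'', is not an instance of Rule $E2$. For the binary alphabet, Rule $E2$ exchanges $ab\,y\,ba \leftrightarrow ba\,y\,ab$, i.e., it performs \emph{two} simultaneous adjacent transpositions at separated positions; your move performs a single net rightward shift of a block containing two $a$'s past one $b$, and it changes $\vert w\vert_{ab}$ by $2$ (each of the two $a$'s in $ab^{\delta}a$ loses one $b$ to its right), so it does not preserve $M$\!-equivalence. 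The conclusion you draw from it --- that every $M$\!-ambiguous $w$ with $\vert w\vert_a\ge 2$ is $M$\!-equivalent to a word ending in $a$ --- is in fact false: take $w=abbab$. Its $M$\!-class is $\{abbab,\ baabb\}$ (both have $\vert\cdot\vert_{ab}=4$, and no word with two $a$'s and three $b$'s that ends in $a$ can have $\vert\cdot\vert_{ab}=4$), so $w$ is $M$\!-ambiguous with $\vert w\vert_a=2$ yet is not $M$\!-equivalent to any word ending in $a$; it realizes the second alternative as $a^{1}b^{2}ab^{1}$ with $\alpha=1$. Since your argument could only ever produce the second alternative with $\alpha=0$, it proves a statement that this example refutes.

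The repair is essentially the paper's argument: assuming no word in $[w]$ ends in $a$, choose $w'=uab^{\gamma}\in[w]$ (with $\gamma\ge 1$ and the displayed $a$ the last one) so that $\gamma$ is minimal. If $u$ contained a factor $ba$, say $u=v_1bav_2$, then the genuine $E2$ move $v_1\,(ba)\,v_2\,(ab)\,b^{\gamma-1}\equiv_M v_1\,(ab)\,v_2\,(ba)\,b^{\gamma-1}$ --- note the compensating swap on the left, which is exactly what keeps $\vert\cdot\vert_{ab}$ constant --- would either produce a word ending in $a$ or contradict the minimality of $\gamma$. Hence $u=a^{\alpha}b^{\beta}$ and $w\equiv_M a^{\alpha}b^{\beta}ab^{\gamma}$. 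Your reduction of part~(2) to part~(1) via reversal, and your (vacuous) treatment of the case $\vert w\vert_a=1$, are both fine.
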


\begin{proof}
	We only prove (1) as the proof of (2) is analogous.	Assume $w$ is not \mbox{$M$\!-equivalent} to any word that ends in $a$. Consider the word $w'$, with $w'\equiv_M w$ and $w'=uab^\gamma$, for some $u\in \{a,b\}^*$ and $\gamma\geq 1$ such that $\gamma$ is minimum. If $u = v_1bav_2$, for some $v_1, v_2\in \{a,b\}^*$, then using Rule $E2$, it follows that
	$w \equiv_M v_1bav_2 ab^\gamma \equiv_M v_1ab v_2 ba b^{\gamma-1}$, contradicting the minimality of $\gamma$. Hence, $ba$ cannot be a factor of $u$ and therefore $u= a^\alpha b^\beta$, for some $\alpha, \beta\geq 0$.	
\end{proof}

Rule $E2$ is sufficient to completely characterize $M$\!-equivalence for the binary alphabet but not for higher alphabets. When restricted to the ternary alphabet, some generalization of Rule $E2$ called Rule $E2.t$ was proposed in~\cite{atanasiu2014parikh}. In this work, a special case, namely Rule $E2.2$, was applied repeatedly in various places. For example, letting  $\Sigma=\{a<b<c\}$ and supposing  $u,v,w\in \Sigma^*$, if $\vert u\vert_c+ \vert v \vert_c= \vert v\vert_a+ \vert w \vert_a$, then the words $abubcvbawcb$ and $baucbvabwbc$ are \mbox{$M$\!-equivalent} according to Rule $E2.2$.  However, to avoid unnecessary technicality, readers who wish to know more about Rule $E2.t$ are referred to~\cite{teh2016conjecture}. Instead, whenever we claim the $M$\!-equivalence of some two words, this can be verified directly by the readers, if necessary, as shown in the following example.

\begin{example}\label{1111a}
Let $w=	ab^{\alpha}c b^{\beta} a b^{\gamma} c b^{\delta} a$, for some $\alpha, \beta, \gamma, \delta\geq 1$.
	If $\alpha = 2k$, for some $k < \delta$, then, letting $ w'= b^kac b^{\beta} a b^{\gamma+\alpha} c b^{\delta-k}a $, we claim that $w\equiv_M w'$. To see this, first it is clear that
	$\vert w\vert_x= \vert w'\vert_x$, for $x\in \{a,b,c\}$.
Also, $\vert w'\vert_{ab}= \beta+ 2(\gamma+\alpha+\delta-k) = \alpha+\beta + 2(\gamma+ \delta) =\vert w\vert_{ab}$. Similarly,
$\vert w'\vert_{bc}= \vert w\vert_{bc}$. Finally,
$\vert w'\vert_{abc}= \beta+ 2(\gamma+\alpha) =  2\alpha +\beta+2\gamma  =\vert w\vert_{abc}$. Therefore, by Theorem~\ref{1206a}, $w\equiv_M w'$.\qedSymC
\end{example}	

We point out that a complete characterization of $M$\!-equivalence for ternary alphabets has been fully described in~\cite{hahn2023equivalence, salomaa2010criteria}. However, for the scope of our current work the previously described Rule $E2.2$ is enough to navigate the words we deal with.
	
Strong $M$\!-equivalence is a variation of $M$\!-equivalence that simultaneously takes into account different orderings of the alphabet \cite{poovanandran2019strong}. As we shall see this $M$\!-equivalence shares some resemblance to $\mathbb{P}$-equivalence to be first coined in Definition~\ref{2812a}.

	\begin{definition}
		Let $\Sigma$ be an alphabet and $w, w'\in \Sigma^*$. We say that $w$ and $w'$ are \emph{strongly $M$\!-equivalent} if $w$ and $w'$ are always $M$\!-equivalent, regardless of the total ordering on $\Sigma$. That is, $\Psi_{\Lambda}(w)= \Psi_{\Lambda}(w')$, for any ordered alphabet $\Lambda$, with $\Sigma$ as the underlying alphabet.
	\end{definition}
	
	The following remark shows why strong $M$\!-equivalence is indeed much stronger than $M$\!-equivalence, and yet natural.
	
	\begin{remark}
		Let $\Sigma=\{a_1<a_2<\cdots<a_s\}$ and $w, w'\in \Sigma^*$. Then, by Theorem~\ref{1206a}, words $w$ and $w'$ are strongly $M$\!-equivalent if and only if, 
		for every $1\leq l\leq s$,
		$\vert w\vert_{a_{i_1}a_{i_2}\dotsm a_{i_l} }= \vert w'\vert_{a_{i_1}a_{i_2}\dotsm a_{i_l} }$, whenever $a_{i_1}, a_{i_2}, \dotsc, a_{i_l}$ are $l$ distinct elements of $\Sigma$.	
	\end{remark}

\subsection{$\mathbb{P}$-Distinguishability}

Let $\Sigma$ be an ordered alphabet and $S\subseteq \Sigma$. We let $\Psi_S$ denote the Parikh matrix mapping with respect to $S$, where it is understood that $S$ is given the total ordering inherited from $\Sigma$.

\begin{definition}[\cite{dick2021reducing}]\label{2511a}
Let $\Sigma$ be an ordered alphabet and $w, w'\in \Sigma^*$. We say that $w$ and $w'$ are \emph{$\mathbb{P}$-distinct} if $\Psi_S(\pi_S(w))\neq \Psi_S( \pi_S (w'))$, for some $S\subseteq \Sigma$. 	
When $w$ is $M$\!-ambiguous, we say that $[w]$ is \emph{$\mathbb{P}$-distinguishable} (or simply distinguishable) if there exists $w'\in [w]$ such that $w$ and $w'$ are $\mathbb{P}$-distinct. Otherwise, $[w]$ is  \emph{$\mathbb{P}$-indistinguishable}.
\end{definition}	

If there exists $w'\in [w]$ such that
$w'$ contains a factor $ab$, for some $a,b\in \Sigma$, such that  $a$ and $b$ are non-consecutive in the ordering of $\Sigma$, then trivially the $M$-class $[w]$ is $\mathbb{P}$-distinguishable and when that happens, we say that $[w]$ is \emph{trivially $\mathbb{P}$-distinguishable} (or simply trivially distinguishable).

\begin{definition}\label{2812a}
Let $\Sigma$ be an ordered alphabet and $w, w'\in \Sigma^*$. We say that $w$ and $w'$ are \emph{$\mathbb{P}$-equivalent} if $\Psi_S(\pi_S(w))= \Psi_S( \pi_S (w'))$, for every $S\subseteq \Sigma$. Equivalently, in this case, $w$ and $w'$ are not $\mathbb{P}$-distinct.	
\end{definition}

\begin{remark}\label{2511b}
	$\mathbb{P}$-equivalence is indeed an equivalence relation. It is a refinement of $M$\!-equivalence as if $w$ and $w'$ are $\mathbb{P}$-equivalent, then they are $M$\!-equivalent but not vice versa. Hence, every $M$\!-equivalence class is a disjoint union of \mbox{$\mathbb{P}$-equivalence} classes. When $w$ is $M$\!-ambiguous, $[w]$ is $\mathbb{P}$-indistinguishable if and only if $[w]$ is itself a $\mathbb{P}$-equivalence class.
\end{remark}

We point out that the concept of $\mathbb{P}$-distinguishability was originally defined in~\cite{dick2021reducing} for Parikh matrices, and there a Parikh matrix $\Psi_{\Sigma}(w)$ is said to be \emph{\mbox{$\mathbb{P}$-distinguishable}} when $w$ is $M$\!-unambiguous or $[w]$ is  $\mathbb{P}$-distinguishable. Since an $M$\!-class  can be identified with its associated Parikh matrix, the difference is not essential.  However,  it causes some disparity if we were to accept $[w]$ to be \mbox{$\mathbb{P}$-distinguishable} when $w$ is $M$\!-unambiguous (see the last line of Remark~\ref{2511b}). Therefore, when it concerns $\mathbb{P}$-distinguishability,  we choose to focus on \mbox{$M$\!-ambiguous} words as in Definition~\ref{2511a}.

\begin{remark}\label{2812b}
	Let $\Sigma=\{a_1<a_2<\cdots<a_s\}$ and $w, w'\in \Sigma^*$. Then, by Definition~\ref{2812a} and Theorem~\ref{1206a}, $w$ and $w'$ are  $\mathbb{P}$-equivalent if and only if, for every $1\leq l\leq s$, $\vert w\vert_{a_{i_1}a_{i_2}\dotsm a_{i_l} }= \vert w'\vert_{a_{i_1}a_{i_2}\dotsm a_{i_l} }$ whenever $1\leq i_1<i_2< \dotsb< i_l\leq s$.	
\end{remark}

Remark~\ref{2812b} shows that, analogous to strong $M$\!-equivalence,  $\mathbb{P}$-equivalence captures a natural combinatorial property between words.

\begin{proposition}[\cite{dick2021reducing}]
Let $\Sigma=\{a_1<a_2<\cdots<a_s\}$ and $w, w'\in \Sigma^*$. If $w=xa_ia_{j}ya_{j}a_iz$ and $w'=xa_{j}a_iya_ia_{j}z$, for some $x,y,z\in\Sigma^*$ and $i<j$ such that $y\in \{ a_i, a_{i+1}, \dotsc, a_j\}^*$, then $w$ and $w'$ are $\mathbb{P}$-equivalent.
\end{proposition}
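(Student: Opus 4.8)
The plan is to use the combinatorial criterion for $\mathbb{P}$-equivalence recorded in Remark~\ref{2812b}: it suffices to check that, for every increasing tuple $1\le i_1<i_2<\dotsb<i_l\le s$, the words $w=xa_ia_jya_ja_iz$ and $w'=xa_ja_iya_ia_jz$ have the same number of occurrences of $a_{i_1}a_{i_2}\dotsm a_{i_l}$ as a scattered subsequence. Since $w$ and $w'$ differ only in the two displayed blocks (swapping $a_ia_j$ for $a_ja_i$ in two places), the Parikh vectors of $w$ and $w'$ coincide, so the case $l=1$ is immediate, and more generally the count of any subsequence $u$ that uses at most one letter position inside the altered region is unaffected. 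Hence I would fix an increasing subsequence pattern $u=a_{i_1}\dotsm a_{i_l}$ and decompose $|w|_u$ according to how an embedding of $u$ interacts with the four distinguished letter occurrences; the only contributions that can differ between $w$ and $w'$ are those that select letters from inside the two swapped pairs.

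\textbf{Key steps.}
First I would set up notation: write $w = x\,A\,y\,B\,z$ with $A=a_ia_j$, $B=a_ja_i$, and $w'=x\,A'\,y\,B'\,z$ with $A'=a_ja_i$, $B'=a_ia_j$. For a fixed increasing pattern $u$, I classify an occurrence of $u$ by which of its letters land in $x$, in $A$ (resp. $A'$), in $y$, in $B$ (resp. $B'$), and in $z$. The prefix falling in $x$ and the suffix falling in $z$ contribute a common factor in $w$ and $w'$, so one reduces to analyzing the ``middle'' part $a_i a_j\, y\, a_j a_i$ versus $a_j a_i\, y\, a_i a_j$ with a fixed contiguous sub-pattern of $u$, say $a_{i_p}a_{i_{p+1}}\dotsm a_{i_q}$, together with how many of its letters are absorbed by each of the four special positions. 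The crucial case is when the pattern contributes exactly one letter to the left pair and exactly one letter to the right pair, because that is where the swap $a_ia_j\leftrightarrow a_ja_i$ could change which letter is chosen; here the hypothesis $y\in\{a_i,a_{i+1},\dotsc,a_j\}^*$ is used, since it guarantees that every pattern letter $a_{i_m}$ chosen strictly between the two pairs satisfies $i\le i_m\le j$, and moreover consecutive indices of $u$ with $i_p=i$ and $i_{p+1}=j$ (the only way to pick $a_i$ from the left and $a_j$ from the right while also respecting increasing order through $y$) force $y$ to contribute only letters with index in $\{i,\dots,j\}$. I would then check by a short direct count that, in each such configuration, the number of ways to complete the embedding is the same for $A y B$ and $A' y B'$: picking $a_i$ from $A$ and $a_j$ from $B$ in $w$ matches picking $a_i$ from $B'$ and $a_j$ from $A'$ in $w'$ (reading left to right), and the factor of $y$ between them is counted identically because the pattern letters in $y$ all lie in $\{a_i,\dots,a_j\}$. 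All remaining configurations (both special letters of $u$ coming from the same pair, or at most one special letter total, or $u$ passing through the region without using any of the four positions) are manifestly identical in $w$ and $w'$, for instance because swapping two adjacent letters $a_ia_j$ and $a_ja_i$ does not change $|a_ia_j|$-type counts when $y$ is restricted as above — this is essentially Rule $E2$ or a localized version of it applied separately at each pair.

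\textbf{Main obstacle.}
The bookkeeping case analysis is routine but delicate: the real content is isolating precisely which embeddings of $u$ can possibly be sensitive to the swap and verifying that the restriction $y\in\{a_i,\dots,a_j\}^*$ is exactly strong enough to neutralize them. In particular, one must be careful that a pattern letter $a_{i_m}$ with $i_m<i$ or $i_m>j$ cannot be embedded into $y$, since such a letter would break the matching between the two descriptions (it is here that an unrestricted $y$ would produce a genuine difference, which is why the hypothesis cannot be dropped). I expect the only genuinely non-trivial verification to be the ``one letter from each pair'' case, and the cleanest way to dispatch it is to exhibit an explicit bijection between the relevant sets of embeddings in $w$ and in $w'$ that fixes the prefix part in $x$, the suffix part in $z$, and the sub-pattern chosen inside $y$, and merely reinterprets which occurrence of $a_i$ (resp. $a_j$) is used. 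Once that bijection is in place, summing over all pattern configurations yields $|w|_u=|w'|_u$ for every increasing $u$, so Remark~\ref{2812b} gives $\mathbb{P}$-equivalence.
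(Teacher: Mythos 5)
The paper itself states this proposition only as a citation of \cite{dick2021reducing} and gives no proof, so your argument has to stand on its own. Your framework is the right one: by Remark~\ref{2812b} it suffices to check $\vert w\vert_u=\vert w'\vert_u$ for every pattern $u=a_{i_1}\dotsm a_{i_l}$ with strictly increasing indices, and the factorization identity $\vert pq\vert_u=\sum_{u=u_1u_2}\vert p\vert_{u_1}\vert q\vert_{u_2}$ lets you strip off $x$ and $z$ and compare $\alpha=a_ia_j\,y\,a_ja_i$ with $\alpha'=a_ja_i\,y\,a_ia_j$ via $\vert\alpha\vert_v=\sum_{v=v_1v_2v_3}\vert a_ia_j\vert_{v_1}\vert y\vert_{v_2}\vert a_ja_i\vert_{v_3}$. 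But you have inverted which configuration is the hard one, and the case you wave away is exactly where the argument breaks. For increasing $v_1$ the counts $\vert a_ia_j\vert_{v_1}$ and $\vert a_ja_i\vert_{v_1}$ agree for $v_1\in\{\lambda,a_i,a_j\}$ and both vanish otherwise, \emph{except} at $v_1=a_ia_j$, where they are $1$ and $0$. Consequently your ``one letter from each pair'' case contributes identically to $\alpha$ and $\alpha'$ with no hypothesis on $y$ whatsoever (each pair contains exactly one $a_i$ and one $a_j$, and all of $y$ lies between the chosen letters in both words), while the ``both special letters from the same pair'' case, which you call manifestly identical, is precisely where the two words differ locally: the left pair of $w$ contains the subsequence $a_ia_j$ once and the left pair of $w'$ contains it zero times. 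Your justification that swapping two adjacent letters does not change $\vert\cdot\vert_{a_ia_j}$-type counts ``applied separately at each pair'' is false; swapping one pair alone changes that count by one, and no single-pair version of Rule $E2$ applies here.

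What actually saves this case is a cancellation \emph{across} the two pairs, and this is the only place the hypothesis on $y$ is used. If $v_1=a_ia_j$, then the rest of the pattern $v_2v_3$ consists of letters of index strictly greater than $j$, so $v_3$ must be empty and $\vert y\vert_{v_2}=0$ unless $v_2=\lambda$, because $y\in\{a_i,\dotsc,a_j\}^*$; symmetrically when $v_3=a_ia_j$ the prefix $v_1v_2$ has indices below $i$ and must be empty. Hence the only surviving discrepancy terms occur for the pattern $v=a_ia_j$ itself, where $w$ gains one occurrence from its left pair and $w'$ gains one from its right pair, and these cancel. (That the hypothesis cannot be dropped is seen exactly in this same-pair case: with $i=1$, $j=2$, $y=a_3$ one gets $\vert a_1a_2a_3a_2a_1\vert_{a_1a_2a_3}=1$ but $\vert a_2a_1a_3a_1a_2\vert_{a_1a_2a_3}=0$.) So the proposal's decomposition is sound, but as written it omits the one genuinely nontrivial verification and misattributes the role of the hypothesis; with the same-pair case repaired as above, the proof goes through.
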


The above rule is a natural extension of the Rules $E1$ and $E2$ to \mbox{$\mathbb{P}$-equivalence}. Furthermore, some $\mathbb{P}$-equivalence preservation rule that mirrors Rule $E2.2$ was also presented in~\cite{dick2021reducing}. However, in this work, we are mainly concerned with a proposed conjecture about $\mathbb{P}$-distinguishability.

\begin{conjecture}[\cite{dick2021reducing}]\label{160720a}
Let $\Sigma$ be an ordered alphabet and take $w\in\Sigma^*$ to be $M$\!-ambiguous. 
Then the $M$-class $[w]$ is $\mathbb{P}$-distinguishable if and only if $[w]$ is trivially  \mbox{$\mathbb{P}$-distinguishable}. 
\end{conjecture}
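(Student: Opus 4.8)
The statement is a biconditional, and I would treat the two implications separately. The reverse implication---that trivial $\mathbb{P}$-distinguishability implies $\mathbb{P}$-distinguishability---is nearly immediate, but it is worth recording the witness explicitly. Suppose some $w'\in[w]$ contains a factor $a_ia_j$ with $|i-j|\ge 2$; say $i<j$ (the other case is symmetric). By Rule $E1$ the word $w''$ obtained from $w'$ by swapping this adjacent pair, replacing $a_ia_j$ by $a_ja_i$, satisfies $w''\equiv_M w'$, so $w''\in[w]$. Taking $S=\{a_i<a_j\}$, the projection records the two-letter subsequence count $|\pi_S(w')|_{a_ia_j}=|w'|_{a_ia_j}$, and the swap destroys exactly the single adjacent $a_i$-before-$a_j$ pair while leaving every other such pair untouched; hence $|w''|_{a_ia_j}=|w'|_{a_ia_j}-1$ and $\Psi_S(\pi_S(w'))\neq\Psi_S(\pi_S(w''))$. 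Thus $w'$ and $w''$ are $\mathbb{P}$-distinct and $[w]$ is $\mathbb{P}$-distinguishable.

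For the forward implication I would argue by contraposition: assuming $[w]$ is not trivially $\mathbb{P}$-distinguishable, I would show $[w]$ is $\mathbb{P}$-indistinguishable. By Remark~\ref{2511b} this amounts to proving that $[w]$ is a single $\mathbb{P}$-equivalence class, and by Remark~\ref{2812b} it suffices to prove that every gapped subsequence count $|w'|_{a_{i_1}a_{i_2}\cdots a_{i_l}}$ (with $1\le i_1<i_2<\cdots<i_l\le s$, not necessarily consecutive) is constant as $w'$ ranges over $[w]$. Since $M$-equivalence already fixes all consecutive-block counts $|w'|_{a_ia_{i+1}\cdots a_j}$ by Theorem~\ref{1206a}, the task reduces to the gapped counts. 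The hypothesis is exactly that no $w'\in[w]$ has a length-two factor made of non-consecutive letters, i.e., $\print(w')$ is a walk on the path $a_1-a_2-\cdots-a_s$ taking only steps between adjacent vertices. The plan is to exploit this walk structure: any occurrence of $a_i$ preceding an occurrence of $a_j$ with $j\ge i+2$ forces every intermediate letter $a_{i+1},\dots,a_{j-1}$ to appear between them, which I would use to express each gapped count as a fixed function of the $M$-invariant consecutive counts.

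Concretely, I would set up an induction on the total gap $\sum_{t}(i_{t+1}-i_t-1)$ of the subsequence, handling length-two gapped counts $|w'|_{a_ia_j}$ first and then longer ones. For the base step I would relate $|w'|_{a_ia_j}$ to the consecutive counts $|w'|_{a_ia_{i+1}\cdots a_j}$ and to lower-order invariants using the forced appearances of intermediate letters, and for the inductive step I would peel off one letter of the subsequence at a time. An alternative, cleaner route is to identify a generating set of transformations for $M$-equivalence restricted to the class of words whose print is such a path-walk, and then verify that each generator leaves every gapped count unchanged; combined with connectivity of $[w]$ under these generators this would close the argument.

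The main obstacle, I expect, is precisely this last invariance check. The $M$-equivalence preserving transformations (Rules $E1$, $E2$, their $E2.t$ refinements, and whatever else is needed to generate all of $[w]$) are designed to preserve only the consecutive counts, so controlling their effect on all gapped counts simultaneously---while ensuring the transformations keep us inside the path-walk class---is delicate. The difficulty compounds as the number of distinct letters and the length of the gapped subsequences grow, since a single transformation can in principle perturb many gapped counts at once and the forced-intermediate-letter bookkeeping must be tracked across all of them. I anticipate that this is the step where the full combinatorial structure of $M$-equivalence for larger alphabets must be confronted head-on, and it is where I would focus the bulk of the technical effort.
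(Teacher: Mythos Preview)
The conjecture you are trying to prove is \emph{false} for alphabets of size at least four, and the paper establishes exactly this. Your reverse implication is fine, but the forward implication cannot be carried out in general: the paper exhibits an explicit pair of $M$\!-equivalent words over $\{a<b<c<d\}$, namely $w=bcbabcbcdcbcbbabcbccdccbb$ and $v=cbbabbcbcdccbcbabcbcdcbcb$, whose $M$\!-class contains only these two words, neither of which has any factor made of non-consecutive letters, and yet $|w|_{ac}\neq|v|_{ac}$ (and likewise for $bd$, $abd$, $acd$). So $[w]$ is $\mathbb{P}$-distinguishable but not trivially so.

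This counterexample pinpoints where your plan breaks. Your contrapositive strategy rests on the claim that, under the path-walk hypothesis on $\print(w')$, every gapped count $|w'|_{a_{i_1}\cdots a_{i_l}}$ is a fixed function of the consecutive-block counts---you propose to show this either by an induction on total gap or by checking invariance under a generating set for $M$\!-equivalence. The example above shows this claim is simply untrue already for the length-two count $|w'|_{ac}$ over a four-letter alphabet: both $w$ and $v$ satisfy the path-walk constraint, they share all consecutive counts (since $w\equiv_M v$), yet their $ac$-counts differ. Hence no bookkeeping with ``forced intermediate letters'' can express $|w'|_{ac}$ in terms of $M$\!-invariants, and any purported generator-by-generator invariance check must fail on whatever transformation takes $w$ to $v$. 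The paper's positive result is restricted to $|\Sigma|=3$, where the argument is not the general scheme you outline but a detailed case analysis on $|\print(\pi_{a,c}(w))|$, using the binary characterization of $M$\!-equivalence and the known list of ternary $M$\!-unambiguous words.
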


\section{$\mathbb{P}$-Distinguishability for Ternary Alphabets}\label{sec:3}

Throughout this section, we fix an ordered alphabet $\{a<b<c\}$ and denote it by $\Sigma_3$. Let $w\in \Sigma_3^*$ be $M$\!-ambiguous. Note that $[w]$ is distinguishable if and only if there exists $w'\in [w]$ such that $\pi_{a,c}(w) \not\equiv_M \pi_{a,c}(w')$ over $\{a<c\}$. 
If $[w]$ contains a word with $ac$ or $ca$ as a factor, then $[w]$ is trivially distinguishable. 

In this section, we prove our first main result, namely that Conjecture~\ref{160720a} holds for the ternary alphabet. Moreover, we also provide some counterexamples to this conjecture for the case of quaternary alphabet. 
To begin, we observe that for any $w\in \Sigma_3^*$, if $[w]$ is not trivially distinguishable, then the letters $a$ and $c$ occur alternately in $\pi_{a,c}(w)$ except, possibly, as prefix or suffix.

\begin{lemma}\label{311219a}
Let $w\in \Sigma_3^*$.  If $ac^na$ or $ca^nc$ is a factor of $\pi_{a,c}(w)$, for some $n\geq 2$, then the $M$-class $[w]$ is trivially $\mathbb{P}$-distinguishable.
\end{lemma}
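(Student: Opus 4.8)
The plan is to reduce the statement to a single factor of $w$ and then exhibit, in every case, an explicit $M$-equivalent rearrangement of that factor that puts an $a$ next to a $c$. Recall that, over $\Sigma_3$, the only non-consecutive letters are $a$ and $c$, so $[w]$ is trivially $\mathbb{P}$-distinguishable precisely when some member of $[w]$ contains $ac$ or $ca$ as a factor.

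Suppose first that $ac^na$ is a factor of $\pi_{a,c}(w)$ for some $n\ge 2$. Deleting the $b$'s of $w$ keeps these $n+2$ letters consecutive, so $w$ has a factor of the form $v = a\,b^{j_1}\,c\,b^{j_2}\,c\cdots c\,b^{j_{n+1}}\,a$ with $j_1,\dotsc,j_{n+1}\ge 0$ and exactly $n$ letters $c$. By the two-sided invariance of $M$-equivalence (Remark~\ref{2912a}), it suffices to produce $v'\equiv_M v$ containing $ac$ or $ca$; substituting $v'$ for $v$ in $w$ then yields a member of $[w]$ witnessing triviality. The case in which $ca^nc$ is a factor of $\pi_{a,c}(w)$ will follow from this one by the involution $\theta$ that reverses a word and swaps $a$ with $c$: using Theorem~\ref{1206a} one checks that $\theta$ interchanges the $ab$- and $bc$-counts and fixes the $abc$-count, hence preserves $M$-equivalence, while it carries ``$\pi_{a,c}(\cdot)$ has factor $ca^nc$'' to ``$\pi_{a,c}(\cdot)$ has factor $ac^na$'' and preserves containment of $ac$ or $ca$.

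Set $B=\sum_{i=1}^{n+1}j_i$ and $C=\sum_{i=1}^{n}(n-i+1)\,j_i$. By Theorem~\ref{1206a}, $|v|_a=2$, $|v|_c=n$, $|v|_b=|v|_{ab}=B$, $|v|_{bc}=|v|_{abc}=C$, and also $0\le C\le |v|_b\,|v|_c = nB$. Now consider the two words $v_A' = a\,c\,b^{m_1}c\,b^{m_2}c\cdots c\,b^{m_n}a$, which contains $ac$, and $v_B' = a\,b^{m_1}c\,b^{m_2}c\cdots b^{m_n}c\,a$, which contains $ca$; each has $n$ letters $c$. A direct count gives $|v_A'|_{ab}=\sum_i m_i$ and $|v_A'|_{bc}=|v_A'|_{abc}=\sum_{i=1}^{n-1}(n-i)m_i$, and $|v_B'|_{ab}=\sum_i m_i$ and $|v_B'|_{bc}=|v_B'|_{abc}=\sum_{i=1}^{n}(n-i+1)m_i$, with all remaining counts forced. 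Hence, by Theorem~\ref{1206a}, $v_A'\equiv_M v$ if and only if one can choose integers $m_i\ge 0$ with $\sum_i m_i=B$ and $\sum_{i=1}^{n-1}(n-i)m_i=C$, which is possible exactly when $C\le(n-1)B$ (the coefficients $n-i$ run through $0,1,\dotsc,n-1$, so every target value in $[0,(n-1)B]$ is attainable); likewise $v_B'\equiv_M v$ if and only if $B\le C\le nB$, that is, if and only if $B\le C$.

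It remains to note that the two conditions are jointly exhaustive: if $C\le(n-1)B$ we may take $v'=v_A'$, and otherwise $C>(n-1)B\ge B$ (since $n\ge 2$), so $B\le C$ and we may take $v'=v_B'$. In either case $v\equiv_M v'$ and $v'$ contains $ac$ or $ca$, which completes the $ac^na$ case and, via the symmetry described above, the $ca^nc$ case. I expect the only genuine work to be the bookkeeping in the previous paragraph---verifying the subsequence-count formulas for $v_A'$ and $v_B'$ and confirming that every target value in the stated ranges is realised by nonnegative integers summing to $B$; the reduction to a factor of $w$ and the closing case split are then routine.
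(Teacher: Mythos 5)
Your argument is correct, but it proceeds differently from the paper's proof. The paper isolates the factor of $w$ as $ab^{\alpha}cucb^{\beta}a$ with $u\in\{b,c\}^*$ (exposing only the outermost $b$-blocks) and applies a single local rewriting: shifting $\min\{\alpha,\beta\}$ letters $b$ inward on both sides yields an $M$\!-equivalent factor ending in $ca$ or beginning with $ac$, with the middle part $u$ left untouched; the verification is a two-line count in the style of Example~\ref{1111a}. You instead parametrize the factor completely by its $b$-block lengths, reduce everything to the two invariants $B=\vert v\vert_{ab}$ and $C=\vert v\vert_{abc}$, and show that one of two canonical target shapes (one containing $ac$, one containing $ca$) always realizes the pair $(B,C)$, via an integer-feasibility argument for weighted sums with coefficients $0,\dotsc,n-1$ and $1,\dotsc,n$; the case split $C\le(n-1)B$ versus $C>(n-1)B\ge B$ is exactly where $n\ge 2$ enters, matching the hypothesis. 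Your route costs more bookkeeping (the feasibility claim, stated slightly informally, does hold: with budget $B$ and coefficients $0,1,\dotsc,n-1$ every value in $[0,(n-1)B]$ is attainable), but it buys a sharper conclusion, namely an explicit description of which normal form the factor is $M$\!-equivalent to, whereas the paper's proof only needs existence. Your reduction to a factor via Remark~\ref{2912a} and the reversal-plus-letter-swap symmetry for the $ca^nc$ case are both sound.
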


\begin{proof}
Suppose $ac^na$ is a factor of $\pi_{a,c}(w)$, for some $n\geq 2$, as the other case is similar. It follows that
$ab^\alpha c  u  c b^\beta a$ is a factor of $w$, for some $u\in \{b,c\}^*$ and $\alpha, \beta \geq 0$. Note that $ab^\alpha c  u  c b^\beta a$ is $M$\!-equivalent to $ab^{\alpha-\beta}c b^{\beta}u b^\beta ca$ if $\alpha\geq \beta$,
or $ac b^\alpha u b^\alpha c b^{\beta-\alpha}a$ if $\alpha < \beta$. It implies that 
$w$ is $M$\!-equivalent to a word with $ac$ or $ca$ as a factor, by Remark~\ref{2912a}.
Therefore, $[w]$ is trivially distinguishable.	
\end{proof}

When $w$ is $M$\!-ambiguous and $\vert \print (\pi_{a,c}(w) )\vert \leq 4$, it is possible for $[w]$ to be indistinguishable. For example, consider the word $w= abbabc$.
Our next lemma shows that when $\vert \print (\pi_{a,c}(w) )\vert =3$ and $[w]$ is distinguishable, then $[w]$ is trivially distinguishable.

\begin{lemma}\label{2906c}
	Let $w\in \Sigma_3^*$  be $M$\!-ambiguous with $\vert \print (\pi_{a,c}(w) )\vert =3$. If the $M$-class $[w]$ is $\mathbb{P}$-distinguishable, then $[w]$ is 
	trivially $\mathbb{P}$-distinguishable.
\end{lemma}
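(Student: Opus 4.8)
We are told $w \in \Sigma_3^*$ is $M$-ambiguous, $\lvert \print(\pi_{a,c}(w))\rvert = 3$, and $[w]$ is $\mathbb{P}$-distinguishable; we must show it is trivially distinguishable. By Lemma~\ref{311219a} we may assume no factor $ac^na$ or $ca^nc$ with $n\ge 2$ appears in $\pi_{a,c}(w)$, since otherwise we are done immediately. Combined with $\lvert\print\rvert = 3$, this forces $\pi_{a,c}(w)$ to be, up to the symmetry $a\leftrightarrow c$ (which corresponds to reversing the order of $\Sigma_3$, under which $\mathbb{P}$-distinguishability is preserved), one of two shapes: either $a^\alpha c^\beta a^\gamma$ or $a^\alpha (ca)^{\ge 1}$-type alternations collapsing to print $aca$, i.e.\ essentially $\pi_{a,c}(w) = a^\alpha c^\beta a^\gamma$ with $\alpha,\beta,\gamma\ge 1$. (If $\beta \ge 2$ we invoke Lemma~\ref{311219a}, so in fact $\beta = 1$ unless we are already trivially distinguishable — but I will keep $\beta$ general and let Lemma~\ref{311219a} do the work, or handle $\beta\ge 2$ separately.) So the plan is: reduce to $\pi_{a,c}(w) = a^\alpha c a^\gamma$ (print $aca$), with the $b$'s inserted arbitrarily, and show that if this $M$-class is distinguishable then some word in it has $ac$ or $ca$ as a factor.

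**Main argument.**

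The strategy is contrapositive in spirit: assume $[w]$ is \emph{not} trivially distinguishable and show it is then $\mathbb{P}$-indistinguishable, i.e.\ $\pi_{a,c}(w') \equiv_M \pi_{a,c}(w)$ for every $w' \in [w]$. Write $w = x_0\, a\, x_1\, a \cdots a\, x_{\alpha-1}\, c\, x_\alpha\, a\, x_{\alpha+1}\cdots a\, x_{\alpha+\gamma}$ where each $x_i \in \{b\}^*$ (no $a$ or $c$ inside a block, since the $a$'s and single $c$ are displayed, and no further $c$ by Lemma~\ref{311219a}); absence of $ac$ and $ca$ factors forces $x_{\alpha-1}$ and $x_\alpha$ to be nonempty blocks of $b$'s flanking the lone $c$. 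Now take an arbitrary $w' \equiv_M w$. Since the $\pi_{a,c}$-image lives over a binary alphabet, $\pi_{a,c}(w)$ and $\pi_{a,c}(w')$ have the same Parikh vector and the same $\lvert\cdot\rvert_{ac}$ value is \emph{not} automatically preserved — that is exactly the point of distinguishability. So the content of the lemma is: given the constraints on $b$-placement forced by non-trivial-distinguishability, $\lvert w\rvert_{ac} = \lvert w'\rvert_{ac}$ is forced after all. The key computation: $\lvert w\rvert_{ac}$ counts pairs (an $a$, a later $c$); since there is exactly one $c$ (after reduction), $\lvert w\rvert_{ac} = \alpha$ = number of $a$'s before the $c$. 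I would show that the $M$-equivalence relations $E1$, $E2$, and Rule $E2.2$ — which by the cited results generate $M$-equivalence over $\Sigma_3$ — cannot move an $a$ across the $c$ without creating an $ac$ or $ca$ factor somewhere along the way, hence cannot be applied while staying inside the non-trivially-distinguishable part of $[w]$; and conversely any $w'$ with $\pi_{a,c}(w') = a^{\alpha'} c a^{\gamma'}$, $\alpha'\ne\alpha$, would have to have been reached through such a forbidden move. Concretely: the only rule that interchanges letters far apart is $E1$ ($a$ with $c$, since $\lvert \mathrm{index}(a) - \mathrm{index}(c)\rvert = 2$), and applying $E1$ to swap the displayed $c$ past an adjacent letter requires that adjacent letter to be an $a$ — i.e.\ requires an $ac$ or $ca$ factor to exist, contradiction; $E2$ and $E2.2$ move equal letters and never change the relative $a$/$c$ order. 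Hence $\alpha$ is an invariant of the non-trivially-distinguishable words in $[w]$, so $\pi_{a,c}(w')\equiv_M a^\alpha c a^\gamma \equiv_M \pi_{a,c}(w)$, contradicting distinguishability.

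**The main obstacle.**

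The delicate point is the claim that one cannot change $\lvert w\rvert_{ac}$ by a chain of $M$-equivalences \emph{unless} some intermediate word is trivially distinguishable — a priori a chain could dip into trivially-distinguishable words and come back out, which would be fine for our purposes (since then $[w]$ \emph{is} trivially distinguishable), but we must be sure there is no "stealthy" chain that keeps every intermediate word $ac/ca$-free yet changes $\alpha$. Making this rigorous requires either (i) appealing to the known complete characterization of ternary $M$-equivalence (\cite{hahn2023equivalence, salomaa2010criteria}) to enumerate which words share $w$'s Parikh matrix and check the claim directly, or (ii) a careful invariant argument: I would define, for an $ac/ca$-free ternary word, the quantity $\alpha(w) := \lvert w\rvert_{ac}$ and show via Theorem~\ref{1206a} that on the $ac/ca$-free words it is determined by the Parikh matrix together with the block structure around the unique $c$, and that $E2$/$E2.2$ preserve it trivially while $E1$ is inapplicable across the $c$. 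I expect route (ii), handled through the factor $ab^\alpha c u c b^\beta a$ normal form already used in the proof of Lemma~\ref{311219a}, to be the cleanest: push all $b$'s to canonical positions via $E2$/$E2.2$ (staying $ac/ca$-free), reducing to a word $a^\alpha b^{\beta_1} \cdots$ of a very constrained form whose $M$-class is then visibly a single $\mathbb{P}$-class. The bookkeeping in that normalization — tracking which $E2.2$ applications are legal given the $b$-exponent conditions $\lvert y\rvert_{a_{j-1}} = \lvert y\rvert_{a_{j+2}} = 0$ — is where the real work lies, but it is routine calculation of the kind illustrated in Example~\ref{1111a}.
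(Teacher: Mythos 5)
Your reduction is sound as far as it goes: with $\print(\pi_{a,c}(w))=aca$ and Lemma~\ref{311219a} in hand, the word has a unique $c$, and the whole lemma comes down to showing that the number of $a$'s preceding that $c$ is the same in every member of $[w]$ once $[w]$ is assumed not trivially distinguishable. But the proposal never actually proves this invariance; it defers it to a rewriting argument whose premise is unsupported. You assert that $E1$, $E2$ and $E2.2$ ``by the cited results generate $M$\!-equivalence over $\Sigma_3$'' — the paper claims no such thing. It explicitly states that $E2$ suffices only for binary alphabets, that $E2.2$ is merely one special case of the $E2.t$ family, and it cites~\cite{hahn2023equivalence, salomaa2010criteria} for a characterization whose form you would still have to inspect. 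Without a verified generating set of rules, the statement ``$E2$/$E2.2$ never change the relative $a$/$c$ order while $E1$ is inapplicable'' proves nothing about an arbitrary $w'\in[w]$, and your fallback route (ii) — a normal-form/invariant argument — is exactly the part you label ``where the real work lies'' and leave undone. Note also that any such argument must use something genuinely ternary: Example~\ref{exmp:conj} shows that for four letters, $M$\!-equivalent words with all adjacent pairs consecutive can nonetheless differ in $\vert\cdot\vert_{ac}$, so a purely rule-schematic invariance claim cannot be the whole story.

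The paper closes this gap without any appeal to generating rules for ternary $M$\!-equivalence. Writing $w=ucv$ and an arbitrary $w'=u'cv'$ with $u,v,u',v'\in\{a,b\}^*$, it uses Lemma~\ref{1011c} (a purely binary fact) plus non-trivial-distinguishability to put $u\equiv_M a^{\alpha}b^{\beta}ab^{\gamma}$ and $u'\equiv_M a^{\alpha'}b^{\beta'}ab^{\gamma'}$ with $\gamma,\gamma'\geq 1$, and then shows $\alpha=\alpha'$ by comparing the Parikh-matrix entries directly: $\vert w\vert_{bc}=\vert w'\vert_{bc}$ forces $\beta+\gamma=\beta'+\gamma'$, and if $\alpha>\alpha'$ a short estimate gives $\vert w\vert_{abc}>\vert w'\vert_{abc}$, a contradiction. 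This is the concrete counting step your plan is missing; I would recommend replacing the rule-based core of your argument with it (or with some equally explicit invariant computation) rather than leaving it as routine bookkeeping.
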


\begin{proof}
	Suppose $\print(\pi_{a,c}(w))=aca$ as the other case is analogous. Note that $\vert w\vert_c=1$ by Lemma~\ref{311219a}.
	We argue by contraposition. 
	
	Assume $[w]$ is not trivially distinguishable. Let $w'$ be $M$\!-equivalent to $w$.  Let $w=ucv$ and $w'=u'cv'$, for some $u,v,u',v'\in \{a,b\}^*$. 
	By Lemma~\ref{1011c} and our assumption, $u=a^{\alpha}b^{\beta}a b^{\gamma}$, for some $\alpha, \beta\geq 0$ and $\gamma \geq 1$, and similarly $u'=a^{\alpha'}b^{\beta'}a b^{\gamma'}$, for some $\alpha', \beta'\geq 0$ and $\gamma'\geq 1$. Assume $\alpha \neq \alpha'$ and say $\alpha >\alpha'$. Since $\vert w\vert_{bc}=\vert w'\vert_{bc}$, it follows that $\beta+\gamma=\beta'+\gamma':= L  $. However, $\vert w\vert_{abc} = \alpha(\beta+\gamma) +\gamma> \alpha L \geq \alpha'L+L   \geq  \alpha'(\beta'+\gamma') +\gamma'=  \vert w'\vert_{abc}$, which contradicts the \mbox{$M$\!-equivalence} of $w$ and $w'$. Hence, $\alpha= \alpha'$ and this implies that $\pi_{a,c}(w)=\pi_{a,c} (w')$. Since $w'$ is arbitrary, it follows that $[w]$ is indistinguishable.
\end{proof}

On the other hand, when $w$ is $M$\!-ambiguous and $\vert \print (\pi_{a,c}(w) )\vert \geq 5$, we show that it is impossible for $[w]$ to be indistinguishable. 

\begin{lemma}\label{1011a}
Let $w\in \Sigma_3^*$ with $\vert \print (\pi_{a,c}(w) )\vert \geq 7$. Then the \mbox{$M$-class} $[w]$ is trivially $\mathbb{P}$-distinguishable.
\end{lemma}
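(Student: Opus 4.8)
The plan is to argue by contraposition: assuming $[w]$ is not trivially $\mathbb{P}$-distinguishable, I will produce a word $w' \in [w]$ with $\pi_{a,c}(w') \neq \pi_{a,c}(w)$ (as words over $\{a<c\}$), hence witnessing distinguishability. By Lemma~\ref{311219a} we may assume that in $\pi_{a,c}(w)$ the letters $a$ and $c$ alternate, except possibly for a prefix $a^p$ or $c^p$ and a suffix $a^q$ or $c^q$; in particular, since $\vert \print(\pi_{a,c}(w))\vert \geq 7$, there is a long alternating block. Correspondingly, $w$ itself contains a long factor of the shape $\cdots a\, b^{i_1} c\, b^{i_2} a\, b^{i_3} c\, b^{i_4} a \cdots$ (up to swapping the roles of $a$ and $c$), with the $b$-exponents $i_j \geq 0$. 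The first step is to record this normal form precisely, using Lemma~\ref{1011c} on the two ends to control what happens before the first and after the last occurrence of the minority letter, exactly as was done in the proof of Lemma~\ref{2906c}.

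The second step is to exhibit, within such a sufficiently long alternating block, a local rewriting of $w$ that is $M$\!-equivalence preserving (verifiable directly via Theorem~\ref{1206a}, in the style of Example~\ref{1111a}, or via Rule $E2.2$) but which changes $\pi_{a,c}$. The model to keep in mind is Example~\ref{1111a}: in a factor $a b^{\alpha} c b^{\beta} a b^{\gamma} c b^{\delta} a$ one can "move" a block $b^k$ of $b$'s across a $c$, producing a word in which an $a$ that used to lie between two $c$'s now sits before a run of $b$'s followed by $c$ — and by iterating/combining such moves one can force two consecutive $c$'s (or two consecutive $a$'s) to appear in the projection $\pi_{a,c}$, i.e.\ change its print. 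Length $\geq 7$ for $\print(\pi_{a,c}(w))$ is what guarantees enough room: we have at least (say) the pattern $a c a c a c a$ or $c a c a c a c$ in the alternating part, giving at least three "interior" minority letters flanked on both sides, which is the configuration that makes the Example~\ref{1111a}-type transformation available regardless of the precise $b$-exponents and regardless of the prefix/suffix shape.

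The third step is bookkeeping: I will split into the cases according to which letter is in the majority in the alternating region and according to the prefix/suffix types ($a^p\cdots a^q$, $a^p\cdots c^q$, etc.), and in each case point to the concrete $M$\!-equivalent word $w'$ whose $a$/$c$-projection print differs from that of $w$ — e.g.\ has a factor $cc$ or $aa$ where $w$'s projection does not. Verifying $w \equiv_M w'$ in each case reduces to checking equality of $\vert\cdot\vert_a, \vert\cdot\vert_b, \vert\cdot\vert_c, \vert\cdot\vert_{ab}, \vert\cdot\vert_{bc}, \vert\cdot\vert_{abc}$, a finite routine computation with the $b$-exponents as parameters.

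The main obstacle I anticipate is the case analysis for the boundary behavior together with small or zero $b$-exponents: when several consecutive $i_j$ vanish, some of the otherwise-natural transformations become trivial or illegal (Rule $E2$/$E2.2$ side conditions on the letters appearing in the "middle" part), so I must make sure that the threshold $7$ really does leave enough unobstructed interior structure for at least one valid $\pi_{a,c}$-altering move to go through in every subcase. Stepping the threshold down from $7$ to $5$ is presumably where this becomes genuinely delicate — and indeed the phrasing of the surrounding text (Lemma~\ref{1011a} is stated for $\geq 7$, while the discussion mentions $\geq 5$) suggests the authors prove the clean statement at $7$ first.
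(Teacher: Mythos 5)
Your proposal misses the actual mechanism and, more importantly, aims at the wrong target. The lemma asserts that $[w]$ is \emph{trivially} $\mathbb{P}$-distinguishable, i.e.\ that some $w'\in[w]$ contains a factor $ac$ or $ca$. Your plan --- assume $[w]$ is not trivially distinguishable and then produce $w'\in[w]$ with $\pi_{a,c}(w')\neq\pi_{a,c}(w)$ --- would only establish that $[w]$ is distinguishable, which does not contradict the standing assumption and is not the contrapositive of the statement; the argument never closes. (Worse, in view of Theorem~\ref{0507a} such a $w'$ cannot exist under that assumption, so the goal you set yourself is unreachable.) You have also misread Example~\ref{1111a}: the transformation there does \emph{not} change $\pi_{a,c}$ (both $w$ and $w'$ project to $acaca$); what it does is annihilate all the $b$'s sitting between one adjacent $a$--$c$ pair, so that the \emph{word itself} acquires a factor $ac$. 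Creating such a factor, not altering the projection, is exactly what trivial distinguishability requires.

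The paper's proof is much shorter than your outline suggests and needs none of your case analysis. By Lemma~\ref{311219a}, $\pi_{a,c}(w)$ contains $acacaca$ or $cacacac$; if any two adjacent projected letters had no $b$ between them in $w$ we would already have an $ac$/$ca$ factor, so $w$ contains a factor $z=ab^{\alpha}cb^{\beta}ab^{\gamma}cb^{\delta}ab^{\nu}cb^{\zeta}a$ with all exponents $\geq 1$. The single identity
\[
z\equiv_M b^kab^{\alpha-k}cb^{\beta-k}ab^{\gamma}cb^{\delta+k}ab^{\nu+k}cb^{\zeta-k}a,\qquad k\leq\min\{\alpha,\beta,\zeta\},
\]
applied with $k=\min\{\alpha,\beta,\zeta\}$ forces one of the exponents $\alpha-k,\beta-k,\zeta-k$ to vanish, yielding a word with factor $ac$ or $ca$; by Remark~\ref{2912a} this lifts to $w$, so $[w]$ is trivially distinguishable. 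Note that your anticipated ``main obstacle'' (vanishing $b$-exponents) is in fact the trivial case, and the threshold $7$ is there only to guarantee the seven-letter alternating pattern that makes this one identity applicable --- the cases $5$ and $6$ are treated separately in Lemmas~\ref{2906b} and~\ref{1011b} with a different conclusion.
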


\begin{proof}	
	Assume $[w]$ is not trivially distinguishable. By Lemma~\ref{311219a}, $\pi_{a,c}(w)$ must contain  $acacaca$ or $cacacac$ as a factor. We suppose $\pi_{a,c}(w)$ contains the former as a factor as the other case is similar. Hence, by our assumption, $w$ must have a factor of the form 
	$z=ab^{\alpha}c b^{\beta} a b^{\gamma} c b^{\delta} a b^{\nu} c b^{\zeta} a$ for some $\alpha, \beta, \gamma, \delta, \nu, \zeta\geq 1$.
	Note that here $z\equiv_M bab^{\alpha-1}c b^{\beta-1} a b^{\gamma} c b^{\delta+1} a b^{\nu+1} c b^{\zeta-1} a$. In fact, for every $k\leq \min\{\alpha, \beta, \zeta\}$, it can be verified that
	$z\equiv_M b^kab^{\alpha-k}c b^{\beta-k} a b^{\gamma} c b^{\delta+k} a b^{\nu+k} c b^{\zeta-k} a$. In particular, considering $k=\min \{\alpha, \beta, \zeta\}$, by Remark~\ref{2912a}, it follows that $w$ is $M$\!-equivalent to a word with $ac$ or $ca$ as a factor. Therefore, $[w]$ is trivially distinguishable, contradicting our assumption.
\end{proof}

The following lemma is needed to deal with the case $\vert \print (\pi_{a,c}(w) )\vert \in \{5,6\}$.

\begin{lemma}\label{0507b}
Consider a word $w\in\{ab^{\alpha}c b^{\beta} a b^{\gamma} c b^{\delta} a, cb^{\alpha}a b^{\beta} c b^{\gamma} a b^{\delta} c\}$  for some $\alpha, \beta, \gamma, \delta\geq 1$. Suppose $[w]$ is not trivially $\mathbb{P}$-distinguishable. Then $\alpha=\delta=1$.	
\end{lemma}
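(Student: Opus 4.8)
The plan is to argue by contraposition: assuming $\alpha \ge 2$ (the case $\delta \ge 2$ being symmetric via reversal, since the reverse of $ab^\alpha c b^\beta a b^\gamma c b^\delta a$ is $a b^\delta c b^\gamma a b^\beta c b^\alpha a$), I will exhibit a word $M$-equivalent to $w$ that contains $ac$ or $ca$ as a factor, hence show $[w]$ is trivially $\mathbb{P}$-distinguishable. The natural device is the computation already carried out in Example~\ref{1111a}: there it is shown that if $\alpha = 2k$ with $k < \delta$, then $ab^{\alpha}c b^{\beta} a b^{\gamma} c b^{\delta} a \equiv_M b^k a c b^{\beta} a b^{\gamma+\alpha} c b^{\delta-k} a$, and the right-hand side has $ac$ as a factor. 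So if $\alpha$ is even, $\alpha = 2k$, and $k < \delta$, we are immediately done by Remark~\ref{2912a}. The remaining work is to handle the cases this does not cover: $\alpha$ odd, and $\alpha$ even but $k = \alpha/2 \ge \delta$.

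First I would dispose of the easy subcase. If $\alpha \ge 2$ and $\alpha$ is even with $\alpha/2 < \delta$, invoke Example~\ref{1111a} directly. For the other subcases, the idea is to first normalize the exponents. I would look for an $M$-equivalence-preserving transformation analogous to the one in Example~\ref{1111a} but which shifts $b$'s in the other direction or by a single unit, so as to either reduce $\alpha$ to an odd number handled below, or to increase $\delta$ relative to $\alpha/2$. Concretely, by the same kind of occurrence-counting argument used in Example~\ref{1111a} (checking $|\cdot|_a$, $|\cdot|_b$, $|\cdot|_c$, $|\cdot|_{ab}$, $|\cdot|_{bc}$, $|\cdot|_{abc}$ all agree, then appealing to Theorem~\ref{1206a}), one should be able to verify relations such as $ab^{\alpha}c b^{\beta} a b^{\gamma} c b^{\delta} a \equiv_M ab^{\alpha-2}c b^{\beta} a b^{\gamma+?} c b^{\delta} a \cdot(\text{adjust})$, redistributing two $b$'s from the first block into a later block while keeping the weighted sums fixed; iterating brings $\alpha$ down to $0$ or $1$. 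If $\alpha$ can be driven to $0$, then after the print has $\ldots ac\ldots$ and we are done; if $\alpha$ is driven to $1$, say $w \equiv_M ab c b^{\beta'} a b^{\gamma'} c b^{\delta'} a$, then I need a separate small argument that this word, when $[w]$ is assumed not trivially distinguishable, is in fact $M$-equivalent to something with $ac$ or $ca$ — or else derive a contradiction with the hypothesis that $w$ was chosen with $\alpha \ge 2$ in a minimal fashion.

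The cleanest route is probably to combine the above with a minimality choice: among all words $M$-equivalent to $w$ of the form $ab^{\alpha}c b^{\beta} a b^{\gamma} c b^{\delta} a$ (which exist by the assumption on $|\print(\pi_{a,c}(w))|$ and Lemma~\ref{311219a}), pick one minimizing $\alpha + \delta$, and assume for contradiction $\alpha + \delta \ge 3$, so WLOG $\alpha \ge 2$. Then apply the Example~\ref{1111a}-type move in whichever direction decreases $\alpha$, landing on a word with strictly smaller first exponent but still of the prescribed shape — unless the move produces an $ac$/$ca$ factor, in which case $[w]$ is trivially distinguishable, contradicting our standing assumption — giving the desired contradiction in either branch. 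I expect the main obstacle to be the bookkeeping in the boundary case where the Example~\ref{1111a} transformation's hypothesis ($k < \delta$) just fails, i.e. $\alpha/2 = \delta$ exactly: here one must either find an alternative single-step relation (shifting only a few $b$'s so as to restore $k < \delta$, at the cost of altering $\beta$ or $\gamma$) or a symmetric version operating from the $\delta$-side, and checking that such a relation genuinely preserves all six Parikh entries is the delicate computation. Once that boundary case is closed, the contraposition closes: $\alpha = \delta = 1$.
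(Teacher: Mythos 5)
Your proposal correctly reduces the lemma (by reversal symmetry) to showing that $\alpha\geq 2$ forces trivial distinguishability, and correctly dispatches the subcase ``$\alpha=2k$ even with $k<\delta$'' via Example~\ref{1111a}. But the two remaining subcases are exactly where the content of the lemma lies, and your plan does not close either of them. First, the relation you propose to iterate, namely
$ab^{\alpha}c b^{\beta} a b^{\gamma} c b^{\delta} a \equiv_M ab^{\alpha-2}c b^{\beta} a b^{\gamma+?} c b^{\delta} a$ with some local adjustment, does not exist: a letter $b$ contributes the triple $(\vert\cdot\vert_{ab},\vert\cdot\vert_{bc},\vert\cdot\vert_{abc})=(1,2,2)$ when placed in the $\alpha$-block, versus $(1,1,1)$, $(2,1,2)$, $(2,0,0)$, $(0,2,0)$, $(3,0,0)$ in the $\beta$-, $\gamma$-, $\delta$-blocks and the prefix/suffix positions respectively, and no placement of just two $b$'s among the latter positions sums to $2\cdot(1,2,2)=(2,4,4)$ (matching $\vert\cdot\vert_{abc}=4$ already forces both into the $\gamma$-block, which then gives $\vert\cdot\vert_{bc}=2\neq 4$). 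Any genuine move emptying the $\alpha$-block must also drain the $\delta$-block, which is why the case $k\geq\delta$ needs its own transformation (the paper uses $w\equiv_M b^{\delta}ab^{\alpha-2\delta}cb^{\beta}ab^{\gamma+2\delta}ca$, which covers your ``boundary case'' $k=\delta$ uniformly); you flag this case but supply nothing for it.

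Second, and more seriously, the odd-$\alpha$ case cannot be handled by ``driving $\alpha$ down to $0$ or $1$'': words with $\alpha=\delta=1$ (e.g.\ $abcbabcba$) are legitimately not trivially distinguishable, so arriving at $\alpha=1$ yields no contradiction and, as you note yourself, you have no argument there. What is actually needed is an interplay between the parities of $\alpha$ and $\delta$: the even-$\alpha$ argument (and its mirror) shows both $\alpha$ and $\delta$ must be \emph{odd}; then, if $\delta\geq 3$, one uses a move of a different kind --- $w\equiv_M ab^{\alpha-1}cb^{\beta+2}ab^{\gamma}cb^{\delta-2}ab$, which pushes one $b$ past the final $a$ and two into the $\beta$-block (check: $-(1,2,2)+2(1,1,1)-2(2,0,0)+(3,0,0)=(0,0,0)$) --- thereby making the leading exponent \emph{even}, so that the even-case argument applies to the resulting prefix and yields the contradiction, forcing $\delta=1$ and symmetrically $\alpha=1$. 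Your minimality-on-$\alpha+\delta$ scheme has nothing to iterate, because the one move you do have always terminates immediately in an $ac$ factor rather than producing a smaller word of the prescribed shape. So the proposal identifies the right target and the right tool for one third of the cases, but the essential transformations and the parity bookkeeping are missing.
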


\begin{proof}
Let $w= ab^{\alpha}c b^{\beta} a b^{\gamma} c b^{\delta} a$, as the other case is similar. 
If $\alpha = 2k$, for some $k\geq 1$, then $w\equiv_M b^kac b^{\beta} a b^{\gamma+\alpha} c b^{\delta-k}a $ if $k<\delta$, or $w\equiv_M b^\delta a b^{\alpha-2\delta}  c b^{\beta} a b^{\gamma+2\delta} ca$ if $k\geq \delta$, contradicting that $[w]$ is not trivially distinguishable.
Hence, $\alpha$ is odd and, symmetrically, $\delta$ is odd as well. 
If $\delta \geq 3$, then $w\equiv_M ab^{\alpha-1}c b^{\beta+2} a b^{\gamma} c b^{\delta-2}ab $. Note that $\alpha-1$ is even and nonzero or else $[w]$ is trivially distinguishable. However, we can apply the argument above on $ab^{\alpha-1}c b^{\beta+2} a b^{\gamma} c b^{\delta-2}a$ to reach a contradiction. Hence, $\delta =1$ and, symmetrically, $\alpha=1$ as well. 		
\end{proof}

A complete list of $M$\!-unambiguous words over $\Sigma_3$ already exists.

\begin{remark}\label{rem:M-unambiguity}
A complete list of $M$\!-unambiguous words over $\Sigma_3$ can be found in the appendix of~\cite{serbanuta2006injectivity}, while its complementary finer list of $M$\!-ambiguous words is available from~\cite{hahn2023equivalence}.
\end{remark}

The following list contains every $M$\!-unambiguous word $w$ over $\Sigma_3$ such that $\vert \print (\pi_{a,c}(w) )\vert \in \{5,6\}$:
\begin{gather*}
	a^m bcb^nab^pcba^q, a^mbcbab^ncbab^p, a^mbcbab^ncbabc^p,\qquad\qquad\\
c^m bab^ncb^pabc^q, c^mbabcb^nabcb^p, c^mbabcb^nabcba^p,\qquad\qquad\\
b^mabcb^nabcba^p, b^mcbab^ncbabc^p,\qquad\qquad\qquad\qquad\\
\qquad \qquad b^mabcbabcbab^n, b^mcbabcbabcb^n \qquad \qquad\text{ with } m,n,p,q\geq 1.	
\end{gather*}

The proofs of the next two lemmas show us that,
when  $\vert \print (\pi_{a,c}(w) )\vert \geq 5$ and $[w]$ is not trivially distinguishable, this list includes every possible $w$.

\begin{lemma}\label{2906b}
Let $w\in \Sigma_3^*$ with $\vert \print (\pi_{a,c}(w) )\vert = 5$. Then either the \mbox{$M$\!-class}  $[w]$ is 
	trivially $\mathbb{P}$-distinguishable or $w$ is $M$\!-unambiguous.
\end{lemma}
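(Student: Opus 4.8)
The plan is to reduce to a normal form for $w$ up to $M$-equivalence, and then argue that every word in this normal form that is \emph{not} trivially distinguishable already appears on the list above, and is therefore $M$-unambiguous (by Remark~\ref{rem:M-unambiguity}). First I would use Lemma~\ref{311219a} to conclude that, since $[w]$ is not trivially distinguishable and $\vert\print(\pi_{a,c}(w))\vert=5$, we have $\print(\pi_{a,c}(w))\in\{acaca,cacac\}$; I will treat the case $\print(\pi_{a,c}(w))=acaca$ and remark that $cacac$ is symmetric (interchange $a\leftrightarrow c$, which reverses the order on $\Sigma_3$). Thus $w$ has the shape
\[
x_0\,a\,x_1\,c\,x_2\,a\,x_3\,c\,x_4\,a\,x_5
\]
with each $x_i\in\{b\}^*$, $x_1,x_2,x_3,x_4$ nonempty (no two of $a,c$ adjacent after projection, again using that $[w]$ is not trivially distinguishable), and $x_0,x_5$ possibly empty. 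Writing the middle block as $a b^{\alpha} c b^{\beta} a b^{\gamma} c b^{\delta} a$ with $\alpha,\beta,\gamma,\delta\ge 1$, I can invoke Lemma~\ref{0507b} to force $\alpha=\delta=1$.

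Next I would handle the prefix $x_0=b^m$ and suffix $x_5=b^n$. With $\alpha=\delta=1$, the word is $M$-equivalent to something of the form $b^{m}\,a\,b\,c\,b^{\beta}\,a\,b^{\gamma}\,c\,b\,a\,b^{n}$ (after absorbing $x_0,x_5$), and I must pin down $\beta,\gamma$ and the allowed values of $m,n$. The mechanism is the same as in Lemma~\ref{0507b}: if $\beta$ or $\gamma$ is large, one can slide blocks of $b$'s across using Rule $E2.2$ (as in Example~\ref{1111a}) to create an $ac$ or $ca$ factor, contradicting non-trivial distinguishability; the surviving cases should be exactly $\beta=1$ with arbitrary $\gamma\ge 1$ (or the boundary variants where $m$ or $n$ is forced to be $0$, giving the forms ending in $c^p$ or starting with $c^m$, etc.). A careful bookkeeping of which of $m,n,p,q$ must vanish reproduces precisely the three families in the first line of the list (for the prefix $a$) together with the $b^m\cdots$ families from the last two lines — and their $cacac$-mirror images give the second and third lines. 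Once $w$ is shown to be $M$-equivalent to a word on the list, and each listed word is $M$-unambiguous by Remark~\ref{rem:M-unambiguity}, it follows that $[w]$ is a singleton, hence $w$ itself is $M$-unambiguous.

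The main obstacle I anticipate is the casework around the boundary blocks $x_0$ and $x_5$ and the internal block $b^{\beta}$: one has to be disciplined about \emph{which} $M$-equivalences (slides via $E2.2$, commutations via $E1$) are available without prematurely creating an $ac$/$ca$ factor, and to verify that the only configurations that survive all such moves are those on the list. A secondary subtlety is making the reduction "$w$ has a middle factor $ab^{\alpha}cb^{\beta}ab^{\gamma}cb^{\delta}a$" precise enough to legitimately feed into Lemma~\ref{0507b}, since that lemma is stated for words equal to such an expression, not merely containing it as a factor; here Remark~\ref{2912a} (left/right invariance) together with the observation that $x_0,x_5\in\{b\}^*$ is what makes the application clean, because a pure $b$-prefix/suffix can be peeled off and reattached without affecting the relevant subsequence counts inside the alternating core.
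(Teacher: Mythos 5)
Your overall strategy is the same as the paper's (normalize via Lemma~\ref{311219a}, force $\alpha=\delta=1$ in the alternating core via Lemma~\ref{0507b}, then finish by matching the survivors against the list of $M$\!-unambiguous words), but there is a genuine gap at the very first step. The hypothesis is $\vert \print(\pi_{a,c}(w))\vert=5$, not $\vert \pi_{a,c}(w)\vert=5$: Lemma~\ref{311219a} only collapses the \emph{inner} blocks, so all you may conclude is $\pi_{a,c}(w)=a^m c a c a^n$ with $m,n\geq 1$ arbitrary. Your normal form $x_0\,a\,x_1\,c\,x_2\,a\,x_3\,c\,x_4\,a\,x_5$ with $x_0,x_5\in\{b\}^*$ silently assumes $m=n=1$, i.e.\ that $w$ contains exactly three $a$'s; since $\vert w\vert_a$ is an invariant of the $M$\!-class, no $M$\!-equivalence can repair this. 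For instance $a^3bcbabcba^2$ satisfies the hypotheses (its print of projection is $acaca$) and appears on the list, but is not of your shape. Correctly, the prefix of $w$ before the first $c$ is an arbitrary word over $\{a,b\}$ containing $m$ occurrences of $a$, and pinning it down to $a^{m-1}b^{\alpha}ab^{\beta}$ requires Lemma~\ref{1011c} (otherwise that prefix could be $M$\!-equivalent to a word ending in $a$, which must be excluded via trivial distinguishability); the suffix is handled symmetrically. The paper then needs a further step that your sketch cannot even formulate: the interaction between the outer $a$-blocks and the outer $b$-blocks, namely that $\alpha\geq 1$ forces $m=\delta=1$ and $\zeta\geq 1$ forces $n=\gamma=1$. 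This is precisely what produces the four surviving families (some with $a^m,a^n$ at the ends and $\alpha=\zeta=0$, some with $b^{\alpha},b^{\zeta}$ at the ends and $m=n=1$).

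A secondary issue is that even within your restricted setting the decisive casework is deferred (``the surviving cases should be exactly $\beta=1$ with arbitrary $\gamma\geq 1$\dots''), and as stated it does not match the actual outcome: after Lemma~\ref{0507b} the core is $ab\,c\,b^{\gamma}ab^{\delta}\,c\,ba$ with both $\gamma$ and $\delta$ still free, and which of them collapses to $1$ depends on whether the adjacent outer block is a power of $b$ or a power of $a$, not on a uniform rule. Your observation that Lemma~\ref{0507b} applies to a factor only after peeling off a pure $b$-prefix/suffix via Remark~\ref{2912a} is sound, and the final appeal to Remark~\ref{rem:M-unambiguity} is the right closing move, but the argument needs the corrections above before it establishes the lemma in full generality.
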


\begin{proof}	
Suppose $[w]$ is not trivially distinguishable. We need to show that $w$ is $M$\!-unambiguous. First, suppose $\print (\pi_{a,c}(w) )=acaca$. By Lemma~\ref{311219a}, $\pi_{a,c}(w)= a^m caca^n$, for some $m,n\geq 1$. Since $[w]$ is not trivially distinguishable, by Lemma~\ref{1011c}, it follows that $w$ is $M$\!-equivalent to the following word: 
\[a^{m-1}b^{\alpha}a b^{\beta} c b^{\gamma} a b^{\delta} c b^{\nu} a b^{\zeta} a^{n-1},\]
for some $\beta, \gamma, \delta, \nu \geq 1$ and $\alpha, \zeta \geq 0$.

By Lemma~\ref{0507b} and considering $a b^{\beta} c b^{\gamma} a b^{\delta} c b^{\nu} a$, it follows that $\beta=\nu =1$ or else $[w]$ is trivially distinguishable. Hence, we may now assume that 
\[w=a^{m-1}b^{\alpha}a b c b^{\gamma} a b^{\delta} c b a b^{\zeta} a^{n-1}.\]

\begin{claim}
If $\alpha	\geq 1$, then $m=\delta=1$, and if $\zeta\geq 1$, then $n=\gamma=1$.	
\end{claim}

Suppose $\alpha\geq 1$. If $m\geq 2$, then  $w\equiv_M  
a^{m-2}bab^{\alpha-1}ac b^{\gamma} a b^{\delta+2} c a b^{\zeta} a^{n-1}$,
contradicting that $[w]$ is not trivially distinguishable.
Hence, $w=b^{\alpha}a b c b^{\gamma} a b^{\delta} cb a b^{\zeta} a^{n-1}$. Assume $\delta \geq 2$. Then  $w$ is $M$\!-equivalent to 
the word  $b^{\alpha-1}a b^3 c b^{\gamma} a b^{\delta-2} c b^2 a b^{\zeta} a^{n-1} $. Either $\delta-2=0$ or we apply Lemma~\ref{0507b} on $a b^3 c b^{\gamma} a b^{\delta-2} c b^2 a$. In either case, $[w]$ is trivially distinguishable, which contradicts our assumption. Therefore, $\delta =1$. Similarly, we can prove the second half of our claim.\qedSymC

Now, using the claim and proceeding in cases,  it follows that
\[w=\begin{cases}
a^{m} b c b^{\gamma} a b^{\delta} c b a^n &\text{ if } \alpha=\zeta=0,\\
b^{\alpha} a b c b^{\gamma} a b c b a^{n} &\text{ if } \alpha\geq 1 \text{ and } \zeta=0,\\
a^m b c ba b^{\delta} c b a b^{\zeta} &\text{ if } \alpha =0 \text{ and } \zeta\geq 1,\\
b^{\alpha}a b c b a b c b a b^{\zeta} &\text{ if } \alpha\geq 1 \text{ and } \zeta \geq 1.
\end{cases}
\]
In all cases, $w$ is $M$\!-unambiguous as required.
	
Analogously, when $\print (\pi_{a,c}(w) )= c acac$, it results in another four sets of \mbox{$M$\!-unambiguous} words.
\end{proof}

\begin{lemma}\label{1011b}
Let $w\in \Sigma_3^*$ with $\vert \print (\pi_{a,c}(w) )\vert = 6$. Then either the \mbox{$M$\!-class} $[w]$ is 
	trivially $\mathbb{P}$-distinguishable or $w$ is $M$\!-unambiguous.
\end{lemma}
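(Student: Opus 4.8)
The plan is to run the same machine as in the proof of Lemma~\ref{2906b}, now for the two profiles $\print(\pi_{a,c}(w))=acacac$ and $\print(\pi_{a,c}(w))=cacaca$ (the latter being symmetric to the former under reversal of the word together with reversal of the order on $\Sigma_3$). The new feature is that $acacac$ is not a palindrome, so the two ends of $w$ are no longer interchangeable.

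Assume $[w]$ is not trivially distinguishable and $\print(\pi_{a,c}(w))=acacac$; we must show $w$ is $M$-unambiguous. By Lemma~\ref{311219a}, $\pi_{a,c}(w)=a^{M}cacac^{N}$ for some $M,N\ge1$. Cutting $w$ at its $c$'s into $\{a,b\}^{*}$-blocks and normalising each block by binary $M$-equivalence (Theorem~\ref{1906bb}, Lemma~\ref{1011c}) — which preserves $\Psi_{\Sigma_3}(w)$, since reordering $a$'s and $b$'s inside a single block changes none of $|w|_{a},|w|_{b},|w|_{c},|w|_{ab},|w|_{bc},|w|_{abc}$ — and using throughout that no word in $[w]$ has $ac$ or $ca$ as a factor, one brings $w$ to the form
\[w\equiv_M a^{M-1}b^{\alpha}ab^{\beta}cb^{\gamma}ab^{\delta}cb^{\nu}ab^{\rho}cb^{\zeta}c^{N-1},\qquad \beta,\gamma,\delta,\nu,\rho\ge1,\ \alpha,\zeta\ge0.\]
Feeding the embedded factors $ab^{\beta}cb^{\gamma}ab^{\delta}cb^{\nu}a$ and $cb^{\gamma}ab^{\delta}cb^{\nu}ab^{\rho}c$ into Lemma~\ref{0507b} forces $\beta=\gamma=\nu=\rho=1$, so
\[w\equiv_M a^{M-1}b^{\alpha}abcbab^{\delta}cbabcb^{\zeta}c^{N-1}.\]

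Now comes the asymmetric analogue of the Claim in Lemma~\ref{2906b}. At the \emph{left} end the argument there carries over verbatim: if $\alpha\ge1$ and $M\ge2$ then $w\equiv_M a^{M-2}bab^{\alpha-1}ac\cdots$, which is trivially distinguishable, so $\alpha\ge1$ forces $M=1$; and if in addition $\delta\ge2$, a further $E2.2$-type rewrite yields a word with leading factor $ab^{3}cbab^{\delta-2}cb^{2}a$, whose first $b$-exponent is $3$, so by Lemma~\ref{0507b} $[w]$ is again trivially distinguishable — hence $\alpha\ge1$ also forces $\delta=1$. At the \emph{right} end, by contrast, one shows that $\zeta\ge1$ \emph{always} makes $[w]$ trivially distinguishable: after the analogous preliminary rewrite one writes down, by a direct $\Psi_{\Sigma_3}$-check in the style of Example~\ref{1111a}, a word $M$-equivalent to $w$ containing $ac$ or $ca$; thus $\zeta=0$. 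The only surviving forms are therefore
\[w\equiv_M a^{M}bcbab^{\delta}cbabc^{N}\ \ (\alpha=0)\qquad\text{or}\qquad w\equiv_M b^{\alpha}abcbabcbabc^{N}\ \ (\alpha\ge1),\]
and each of these is $M$-unambiguous, as one checks against the classification recalled in Remark~\ref{rem:M-unambiguity}. The profile $\print(\pi_{a,c}(w))=cacaca$ then follows by the symmetry noted above, yielding the mirror family.

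The main obstacle is precisely this asymmetric Claim. Since $acacac$ is not a palindrome one cannot simply invoke the length-$5$ argument at both ends; moreover the interior block $b^{\delta}$ — which has no counterpart when $|\print(\pi_{a,c}(w))|=5$ — lengthens the chains of $M$-equivalence rewrites and introduces the extra subcase $\delta\ge2$, to be eliminated by a secondary application of Lemma~\ref{0507b} to a shifted factor. Making the case distinction exhaustive and checking that every surviving normal form is $M$-unambiguous is where the remaining care lies.
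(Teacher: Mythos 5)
The skeleton is right and matches the paper up to the point where the inner exponents are pinned down: the block normalisation to $a^{M-1}b^{\alpha}ab^{\beta}cb^{\gamma}ab^{\delta}cb^{\nu}ab^{\rho}cb^{\zeta}c^{N-1}$ and the two applications of Lemma~\ref{0507b} giving $\beta=\gamma=\nu=\rho=1$ are exactly the paper's steps. The error is in your left-end analysis. It is not true that $\alpha\ge1$ merely forces $M=\delta=1$; it forces trivial distinguishability outright, because $babcbab^{\delta}cbabc\equiv_M abbcbbab^{\delta-1}bcacb$ (a direct check of the six subword counts in the style of Example~\ref{1111a}), and the right-hand side contains $ca$. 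Consequently your second ``surviving'' form $b^{\alpha}abcbabcbabc^{N}$ is not $M$\!-unambiguous: already $babcbabcbabc\equiv_M abbcbbabcacb$, so its $M$-class is trivially distinguishable, and indeed this word does not occur in the classification recalled in Remark~\ref{rem:M-unambiguity} (the only entries there whose $\pi_{a,c}$-print is $acacac$ have the form $a^mbcbab^ncbabc^p$). So your final step, ``each of these is $M$-unambiguous, as one checks against the classification'', fails for that form, and the case $\alpha\ge1$ is left unresolved.

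The asymmetry your Claim is built around is also spurious, and noticing this would have flagged the error. The map $w\mapsto\overline{w^{R}}$ (reverse the word, then swap $a\leftrightarrow c$) permutes the entries of the Parikh matrix ($\vert\overline{w^{R}}\vert_{ab}=\vert w\vert_{bc}$, $\vert\overline{w^{R}}\vert_{abc}=\vert w\vert_{abc}$, and so on), hence preserves both $M$\!-equivalence and trivial distinguishability; it fixes the profile $acacac$ and carries your normal form to itself with $(M,\alpha)$ and $(N,\zeta)$ interchanged and $\delta$ fixed. The two ends are therefore mirror images of one another, so your (correct) conclusion that $\zeta\ge1$ always yields trivial distinguishability already implies the same for $\alpha\ge1$, contradicting your claim that $\alpha\ge1$ can survive with $M=\delta=1$. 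For the same reason this map does not reduce the profile $cacaca$ to $acacac$ — it fixes each profile separately — so the symmetry you invoke for the second profile is not the one you stated; you need plain word reversal (or the plain $a\leftrightarrow c$ relabelling), each of which preserves $M$\!-equivalence by a standard but separate fact.
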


\begin{proof}
We assume that $\print (\pi_{a,c}(w) )= ac acac$ as the other case is analogous.
Suppose $[w]$ is not trivially distinguishable. 
By Lemma~\ref{311219a}, $\pi_{a,c}(w)= a^m cacac^n$, for some $m,n\geq 1$.  By Lemma~\ref{1011c},
since $[w]$ is not trivially distinguishable, it implies that $w$ is $M$\!-equivalent to the following word: 
\[a^{m-1}b^{\alpha}a b^{\beta} c b^{\gamma} a b^{\delta} c b^{\nu} a b^{\zeta} c b^{\eta} c^{n-1},\]
for some $\beta, \gamma, \delta, \nu, \zeta \geq 1$ and $\alpha, \eta \geq 0$. 

By Lemma~\ref{0507b} and considering  $a b^{\beta} c b^{\gamma} a b^{\delta} c b^{\nu} a$, it follows that $\beta=\nu=1$ or else $[w]$ is trivially distinguishable. Similarly, by considering $ c b^{\gamma} a b^{\delta} c b^{\nu} a b^{\zeta} c$, it implies that 
$\gamma=\zeta =1$.
Hence, we may assume that $w=a^{m-1}b^{\alpha}abcbab^{\delta}cbabcb^{\eta}c^{n-1}$. Note that  $babcbab^{\delta}cbabc \equiv_M abbcbbab^{\delta-1} bcacb$ and thus $\alpha =0$ since, otherwise, $[w]$ is trivially distinguishable. Similarly, we can argue that $\eta=0$. Therefore, $w=a^{m} b c b a b^{\delta} c b a b c^{n} $, which is $M$\!-unambiguous.
\end{proof}

We are now ready to prove Conjecture~\ref{160720a} for the  ternary alphabet.

\begin{theorem}\label{0507a}
Let $w\in \Sigma_3^*$ be $M$\!-ambiguous. If $[w]$ is \mbox{$\mathbb{P}$-distinguishable}, then $[w]$ is trivially $\mathbb{P}$-distinguishable.	
\end{theorem}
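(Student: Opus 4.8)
The plan is to reduce Theorem~\ref{0507a} to the lemmas already established by a case analysis on the length of $\print(\pi_{a,c}(w))$. Write $z = \print(\pi_{a,c}(w))$. Since $a$ and $c$ alternate in any print word, $z$ is one of the finitely many alternating words in $a$ and $c$, and its length is the only relevant parameter. First I would dispose of the easy ranges: if $\vert z\vert \le 2$, then $\pi_{a,c}(w)$ is of the form $a^*c^*$ or $c^*a^*$ (possibly empty), so every word $M$\!-equivalent to $w$ has the same $a$-before-$c$ / $c$-before-$a$ count and hence the same $\pi_{a,c}$-image; but then, since distinguishability for $\Sigma_3$ is governed solely by $\pi_{a,c}$, the class $[w]$ would be $\mathbb{P}$-indistinguishable, so there is nothing to prove unless $[w]$ is already trivially distinguishable. (In fact we should note that $\vert z\vert = 1$ forces $\vert w\vert_a = 0$ or $\vert w\vert_c = 0$, which by Theorem~\ref{1906bb} applied to the two-letter subalphabet reasoning still poses no issue.)

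Next I would handle $\vert z\vert \ge 3$ by invoking the lemmas directly. If $\vert z\vert \ge 7$, Lemma~\ref{1011a} immediately gives that $[w]$ is trivially distinguishable, so the implication holds vacuously-or-not. If $\vert z\vert = 3$, Lemma~\ref{2906c} is exactly the statement we want: $[w]$ distinguishable implies $[w]$ trivially distinguishable. The two remaining cases $\vert z\vert \in \{5,6\}$ are covered by Lemma~\ref{2906b} and Lemma~\ref{1011b} respectively, which assert that $[w]$ is either trivially distinguishable or $M$\!-unambiguous; since $w$ is assumed $M$\!-ambiguous, only the first alternative survives, and again $[w]$ is trivially distinguishable.

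The only genuine gap in this dispatch is the case $\vert z\vert = 4$, i.e.\ $\print(\pi_{a,c}(w)) \in \{acac, \, caca\}$, which is not addressed by any prior lemma and for which indistinguishable $M$\!-ambiguous classes do exist (e.g.\ $w = abbabc$ with $\pi_{a,c}(w) = abac$ has $\print = abac$... actually $\print(\pi_{a,c}(abbabc)) = \print(aac) = ac$; the relevant genuinely length-$4$ example must be spelled out). So the heart of the proof is a dedicated argument for $\vert z\vert = 4$: assume $[w]$ is not trivially distinguishable and $\mathbb{P}$-distinguishable and derive a contradiction. Proceeding as in Lemmas~\ref{2906b}--\ref{1011b}: Lemma~\ref{311219a} forces $\pi_{a,c}(w) = a^m cac^n$ (for the $acac$ case) with $m,n\ge 1$; Lemma~\ref{1011c} lets us write $w \equiv_M a^{m-1} b^{\alpha} a b^{\beta} c b^{\gamma} a b^{\delta} c b^{\eta} c^{n-1}$ with $\beta,\gamma,\delta \ge 1$ and $\alpha,\eta \ge 0$; then Lemma~\ref{0507b} applied to the embedded factor $a b^{\beta} c b^{\gamma} a b^{\delta} c$ — wait, that factor has only one internal $a$, so one must instead apply the two-letter analysis of Theorem~\ref{1906bb} or a direct $E2$-type computation to the $\{b,c\}$-block and the $\{a,b\}$-blocks. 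Concretely, I expect to show that the exponents are so rigidly constrained that $w$ is forced to lie in one of the explicitly listed $M$\!-unambiguous families of the form $a^m b c b^\gamma a b^\delta c b a^{\cdots}$ (or its reverse-of-alphabet analogue), contradicting $M$\!-ambiguity; the main obstacle is carrying out this rigidity argument cleanly, since the length-$4$ print sits right at the boundary where indistinguishable ambiguous classes appear and the bookkeeping on $\alpha,\beta,\gamma,\delta,\eta$ is the delicate part. Once $\vert z\vert = 4$ is settled, the theorem follows by assembling all the cases.
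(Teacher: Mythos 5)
Your reduction of the cases $\vert z\vert=3$, $\vert z\vert\in\{5,6\}$ and $\vert z\vert\geq 7$ to Lemmas~\ref{2906c},~\ref{2906b},~\ref{1011b} and~\ref{1011a} is exactly what the paper does, and your identification of $\vert z\vert=4$ as the remaining core is correct. However, the argument you sketch for that core cannot succeed: you propose to show that the exponents are ``so rigidly constrained that $w$ is forced to lie in one of the explicitly listed $M$\!-unambiguous families,'' contradicting $M$\!-ambiguity. But $M$\!-ambiguous, indistinguishable classes with $\print(\pi_{a,c}(w))=acac$ genuinely exist --- the paper's own example $w=ab^{11}cbabcb^{5}$ has a six-element $M$\!-class --- so no rigidity argument will force $M$\!-unambiguity here. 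The correct move (and the paper's) is entirely different: assume $[w]$ is distinguishable, pick $w'\in[w]$ with $\pi_{a,c}(w')\not\equiv_M\pi_{a,c}(w)$ over $\{a<c\}$, observe that $\print(\pi_{a,c}(w'))$ cannot be $acac$ (Lemma~\ref{311219a} would force equality of the projections) and hence must be $caca$, $ac$ or $ca$, and then refute each of these three possibilities by comparing the quantity $\vert\cdot\vert_{ab}+\vert\cdot\vert_{bc}$ against suitable multiples of $\vert\cdot\vert_b$ for $w$ and $w'$. Your sketch never considers that $w'$ may have a \emph{different} print from $w$, which is precisely where the content of the theorem lies.

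A second, smaller gap: your dispatch of $\vert z\vert\leq 2$ rests on the claim that every word $M$\!-equivalent to $w$ has the same ``$a$-before-$c$ count and hence the same $\pi_{a,c}$-image.'' This is false: $\vert\cdot\vert_{ac}$ is not an entry of the Parikh matrix over $\{a<b<c\}$ and is not preserved by $M$\!-equivalence --- its failure to be preserved is the whole phenomenon of $\mathbb{P}$-distinguishability. For $\pi_{a,c}(w)=a^mc^n$ one must actually argue: if some $w'\in[w]$ had $\pi_{a,c}(w')=c^na^m$, then $\vert w'\vert_{abc}=0$, while non-trivial-distinguishability forces a $b$ between the $a$-block and the $c$-block of $w$, so $\vert w\vert_{abc}\geq 1$, a contradiction. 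So both the easy boundary case and the central case need to be reworked before the proof stands.
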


\begin{proof}
Clearly, $\vert\print (\pi_{a,c}(w))\vert\geq 1$ or else $w\in \{b\}^*$ and  hence $w$ is \mbox{$M$\!-unambiguous}. Suppose $[w]$ is distinguishable. Observe that here
$\vert\print (\pi_{a,c}(w))\vert\neq 1$, since, otherwise, $w \in\{a,b\}^* \cup \{b,c\}^*$ and thus $[w]$ is indistinguishable.
By Lemmas~\ref{2906c},~\ref{1011a},~\ref{2906b}, and~\ref{1011b}, it remains to deal with when $\vert \print (\pi_{a,c}(v) )\vert \in \{2,4\}$,  for every $ v\in[w]$.

Assume $[w]$ is not trivially distinguishable. First, we deal with the situation where
$\vert\print (\pi_{a,c}(w) )\vert =4$. We may suppose $\pi_{a,c}(w)=a^mcac^n$, for some positive integers $m$ and $n$, as the other case is analogous. Since $[w]$ is not trivially distinguishable, by Lemma~\ref{1011c} and without loss of generality, we may assume 
\[w=a^{m-1} b^{\alpha} a b^{\beta} c b^{\gamma} a b^{\delta} c b^{\nu} c^{n-1},\]
for some $\beta, \gamma, \delta \geq 1$ and $\alpha, \nu \geq 0$. 
Since $[w]$ is distinguishable, there is some $w'\in [w]$ such that $\pi_{a,c}(w')\not\equiv_M \pi_{a,c}(w)$ over $\{a<c\}$. 
Note that $\print(\pi_{a,c}(w'))$ cannot be $acac$ as that forces $\pi_{a,c}(w')= a^mcac^n=\pi_{a,c}(w)$ by Lemma~\ref{311219a}.
It follows that $\print(\pi_{a,c}(w'))$ is  $caca$, $ac$, or $ca$ and thus we have the following cases. 

\setcounter{case}{0}

\begin{case}
	$\pi_{a,c}(w')= c^{n+1}a^{m+1}$.	
\end{case}

This case is impossible since $\vert w'\vert_{abc}=0$, but $\vert w\vert_{abc}>0$.\qedSymC
\begin{case}
	$\pi_{a,c}(w')= a^{m+1}c^{n+1}$.
\end{case}

By Lemma~\ref{1011c}, 
$w'$ is $M$\!-equivalent to the word $a^{m} b^{\alpha'} a b^{\beta'} c b^{\gamma'} c^{n}$, for some $\beta'\geq 1$ and $\alpha',\gamma'\geq 0$. Therefore,
\[a^{m-1} b^\alpha a b^\beta c b^\gamma a b^\delta c b^\nu c^{n-1}\equiv_M a^{m} b^{\alpha'} a b^{\beta'} c b^{\gamma'} c^{n}.\] 
Due to Remark~\ref{2912a}, $M$\!-equivalence is left and right invariant, and it follows that 
\[\underbrace{b^\alpha a b^\beta c b^\gamma a b^\delta c b^\nu}_{z} \equiv_M  \underbrace{a b^{\alpha'} a b^{\beta'} c b^{\gamma'} c}_{z'}.\] 
Note that every letter $b$ in $z$ contributes at most three to the sum $\vert z \vert_{ab} + \vert z \vert_{bc}$,
and clearly $\vert z \vert_{ab} + \vert z \vert_{bc}< 3\vert z\vert_{b}$ as $\gamma\geq 1$. Similarly, it can be verified that $\vert z'\vert_{ab}+ \vert z'\vert_{bc}> 3\vert z'\vert_b$. It follows that
$\vert z \vert_{ab} + \vert z \vert_{bc}< \vert z'\vert_{ab}+ \vert z'\vert_{bc}$,
contradicting the fact that $z\equiv_M z'$. \qedSymC

\begin{case}
	$\pi_{a,c}(w')= c^{n}aca^{m}$.		
\end{case}

Similarly, by Lemma~\ref{1011c} and without loss of generality, we may assume  
\[w'=c^{n-1} b^{\alpha'} c b^{\beta'}a b^{\gamma'}c b^{\delta'} a b^{\nu'}a^{m-1},\]
for some $\beta', \gamma', \delta'\geq 1$ and $\alpha',\nu'\geq 0$.
Note that every letter $b$ in $w$ contributes either $m+n$ or $m+n+1$ 
 to the sum $\vert w \vert_{ab} + \vert w \vert_{bc}$
 and since $\beta\geq 1$, it follows that
 $\vert w \vert_{ab} + \vert w \vert_{bc}> (m+n)\vert w\vert_b \geq 2\vert w\vert_b$.
 Similarly, $\vert w'\vert_{ab}+ \vert w'\vert_{bc}< 2\vert w'\vert_b$ as $\beta'\geq 1$. It follows that
$\vert w \vert_{ab} + \vert w \vert_{bc}> \vert w'\vert_{ab}+ \vert w'\vert_{bc}$,
contradicting the fact that $w\equiv_M w'$. \qedSymC

Finally, it remains to deal with the situation when $\vert \print(\pi_{a,c}(v))\vert =2$, for all $v\in [w]$. Since $[w]$ is distinguishable, we may assume $\pi_{a,c}(w)= a^m c^n$ and $\pi_{a,c}(w')= c^na^m$, for some $w'\in [w]$ and positive integers $m$ and $n$. Since $[w]$ is not trivially distinguishable, it implies that $\vert w\vert_{abc}\geq 1$. However, clearly $\vert w'\vert_{abc}=0$, contradicting the fact that $w\equiv_M w'$.
\end{proof}

The following example shows that Conjecture~\ref{160720a} only stands in the case of ternary alphabets, i.e., Theorem~\ref{0507a}.

\begin{example}\label{exmp:conj}
Consider the ordered alphabet $\Sigma=\{a<b<c<d\}$ and the words $w=bcbab cbcdc bcbba bcbcc dccbb$ and $v=cbbab bcbcd ccbcb abcbc dcbcb$. We can check\footnote{e.g., using the freely available tool at www.github.com/LHutch1/Parikh-Matrices-Toolkit} that the corresponding $M$\!-equivalence class only contains these words, i.e., $[w]=\{w,v\}$, and
\[\Psi_\Sigma(w)=\begin{psmallmatrix}
	1 & 2 & 13 & 51 & 36\\
	0 & 1 & 11 & 61 & 51\\
	0 & 0 & 1 & 10 & 11\\
	0 & 0 & 0 & 1 & 2\\
	0 & 0 & 0 & 0 & 1
\end{psmallmatrix}.
\]

We note that $[w]$ is distinguishable in this case since $|w|_x \neq |v|_x$, for $x\in \{ac,bd,acd,abd\}$. However, clearly $[w]$ is not trivially distinguishable since all of its symbols on consecutive positions are also consecutive in the alphabet.\qedSymC
\end{example}
The above example can easily be extended to alphabets larger than $4$ letters.

\section{Minimal Hamming Distances for Ternary Alphabets}\label{sec:4}

In this section, we begin by providing the definition of minimal Hamming distances for $M$\!-classes introduced in~\cite{atanasiu2022erasure} and generalizing some elementary facts.

\begin{definition}
	Let $\Sigma$ be an alphabet and $w,w'\in\Sigma^*$ with $|w|=|w'|$. The \textit{Hamming distance} between $w$ and $w'$, denoted $d_H(w,w')$, is the number of positions in which the corresponding letters of $w$ and $w'$ differ.
\end{definition}

\begin{definition}\label{DefHamMClass}
Let $\Sigma=\{a_1<a_2<\dotsb<a_s\}$ and take $w\in\Sigma^*$ to be $M$\!-ambiguous.
The \textit{minimal Hamming distance} of $[w]$, denoted $d_H([w])$, is defined by:
	\begin{center}
		$d_H([w])=\min\{\,d_H(w',w'')\,|\,w',w''\in [w] \text{ and } w'\neq w''\,\}.$
	\end{center}
\end{definition}

The next two propositions generalize part of~\cite[Proposition~3.5]{atanasiu2022erasure} and~\cite[Lemma~3.7]{atanasiu2022erasure} to any ordered alphabet.  

\begin{proposition}\label{2711c}
Let $\Sigma=\{a_1<a_2<\dotsb<a_s\}$ and take $w\in\Sigma^*$ to be $M$\!-ambiguous. Then $[w]$ is trivially $\mathbb{P}$-distinguishable if and only if $d_H ([w])=2$. 
\end{proposition}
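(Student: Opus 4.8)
The plan is to prove the two implications separately. For the direction $(\Leftarrow)$, suppose $d_H([w])=2$, so there are distinct $w',w''\in[w]$ differing in exactly two positions, say positions $i<j$. Since $w'\equiv_M w''$ they have the same Parikh vector, so the multiset of letters is preserved; as they differ in only two spots, this forces $w''$ to be obtained from $w'$ by swapping two distinct letters $x$ and $y$ sitting at positions $i$ and $j$. I would then argue that $x$ and $y$ must be non-consecutive in the ordering: if $x=a_k$ and $y=a_{k+1}$ were consecutive, a transposition of an adjacent (in the word) such pair changes $|w'|_{a_ka_{k+1}}$, and more generally one checks that swapping $a_k$ and $a_{k+1}$ across any gap changes the number of scattered $a_ka_{k+1}$-subsequences by a nonzero amount (the net change equals the number of $a_k$'s and $a_{k+1}$'s strictly between the two positions, plus one), contradicting $\Psi_\Sigma(w')=\Psi_\Sigma(w'')$. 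Hence $|k-\ell|\ge 2$ where $x=a_k$, $y=a_\ell$; now one of the two words — whichever has the larger-indexed letter appearing first among positions $i,j$ — contains a factor $a_\ell a_k$ or $a_k a_\ell$ with non-consecutive letters, after we note that if $i$ and $j$ are not adjacent, Rule $E1$ (since the letters between them commute past $x,y$... this needs care) — actually the cleaner route is: among all pairs $w',w''\in[w]$ at Hamming distance $2$, the swapped letters are non-consecutive, and then observe that $w'$ (or $w''$) contains somewhere a length-two factor of two non-consecutive letters, because a minimal-distance witness can be taken with $j=i+1$; if $j>i+1$, all letters strictly between positions $i$ and $j$ must be equal to $x$ or to $y$ (else the Hamming distance of the rearrangement needed would exceed $2$) — wait, that's automatic — so actually $w'=u\,x\,v\,y\,t$ with $v\in\{x,y\}^*$, and within $v$ together with $x,y$ there is an adjacency of $x$ next to $y$ unless $v$ is a power of one of them, in which case $xv$ or $vy$ already exhibits the non-consecutive adjacency. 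Thus $[w]$ is trivially $\mathbb{P}$-distinguishable.

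For the direction $(\Rightarrow)$, suppose $[w]$ is trivially $\mathbb{P}$-distinguishable, so there is $w'\in[w]$ containing a factor $a_i a_j$ with $|i-j|\ge 2$. By Rule $E1$, $w'':=$ (the word obtained by swapping that adjacent pair to $a_j a_i$) satisfies $w''\equiv_M w'$, and $d_H(w',w'')=2$ since $a_i\ne a_j$. Hence $d_H([w])\le 2$. It remains to rule out $d_H([w])\le 1$: distance $0$ is excluded as the two words must be distinct, and distance $1$ is impossible because two words differing in exactly one position have different Parikh vectors, hence cannot be $M$-equivalent. Therefore $d_H([w])=2$.

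The main obstacle is the $(\Leftarrow)$ direction — specifically, extracting from an abstract Hamming-distance-$2$ pair a concrete non-consecutive adjacency in some word of the class. The clean structural claim I want is: if $w'\equiv_M w''$ differ in exactly two positions $i<j$, then the interval strictly between them in $w'$ consists only of copies of the two swapped letters $x,y$ (this is forced, since any other letter there is untouched by the difference, so it contributes equally — fine — but the subsequence counts $|{\cdot}|_{xy}$ or $|{\cdot}|_{yx}$ constrain things), and then $x,y$ must be non-consecutive, after which $w'$ itself contains $xy$ or $yx$ or a power $x^k y$ / $x y^k$-type factor exhibiting the non-consecutive pair. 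Nailing the case analysis on whether the block between $i$ and $j$ is empty, a pure power, or genuinely mixed — and checking in each case that $\Psi$-equality really does force non-consecutiveness of $x,y$ via the $a_ka_{k+1}$ entry — is the delicate computational heart; the rest is bookkeeping with Rule $E1$ and Parikh vectors. I would also double-check the edge cases $s=2$ (where "non-consecutive" is vacuous, so trivial $\mathbb{P}$-distinguishability never holds, consistent with $d_H=2$ being attainable only via $E2$, not $E1$ — here the proposition as stated presumably intends $s\ge 3$, or else the statement degenerates and must be read accordingly).
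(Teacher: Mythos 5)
Your forward direction is fine and matches the paper's (which dismisses it as trivial): Rule $E1$ produces a distance-$2$ pair inside $[w]$, and distance $\le 1$ is impossible because $M$\!-equivalent words share a Parikh vector. Your converse also starts correctly: a distance-$2$ pair must have the form $w'=w_1a_iw_2a_jw_3$, $w''=w_1a_jw_2a_iw_3$ with $i<j$, and your computation that transposing $a_i$ with $a_{i+1}$ changes $\vert\cdot\vert_{a_ia_{i+1}}$ by $\vert w_2\vert_{a_i}+\vert w_2\vert_{a_{i+1}}+1\neq 0$ correctly rules out $j=i+1$, so the swapped letters are non-consecutive.

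However, the step from ``the swapped letters are non-consecutive'' to ``some word of $[w]$ contains an adjacent non-consecutive pair'' has a genuine gap --- the very step you flag as the unfinished ``delicate computational heart.'' Your proposed structural claim, that the block $w_2$ between the two differing positions consists only of copies of the two swapped letters, is false and is not forced by anything: the Hamming distance is $2$ no matter what sits in $w_2$, and none of your counts confine $w_2$ to $\{a_i,a_j\}^*$. Concretely, if $w_2$ contained an $a_{i+1}$ (say $j=i+2$ and $a_iw_2a_j=a_ia_{i+1}a_{i+2}$), the factor would exhibit no adjacent non-consecutive pair and your conclusion would not follow. The missing ingredient, which is exactly what the paper's proof supplies, is to run the same subsequence count $\vert\cdot\vert_{a_ia_{i+1}}$ once more in the case $j\geq i+2$: there the difference $\vert w'\vert_{a_ia_{i+1}}-\vert w''\vert_{a_ia_{i+1}}$ equals $\vert w_2\vert_{a_{i+1}}$, so $M$\!-equivalence forces $\vert w_2\vert_{a_{i+1}}=0$. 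Once $w_2a_j$ is known to contain no $a_{i+1}$, take the last position of $a_iw_2a_j$ carrying a letter of index at most $i$; its successor carries a letter of index at least $i+2$, yielding the required factor $a_{i'}a_{j'}$ with $j'-i'\geq 2$. (Your side worry about $s=2$ is harmless: for a binary alphabet the same count shows $d_H([w])=2$ never occurs, so both sides of the equivalence fail vacuously.)
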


\begin{proof}
	The forward direction is trivial. Conversely, suppose $d_H([w])=2$. Then there exist $w', w''\in [w]$ such that $d_H(w',w'')=2$.
	We may assume $w'= w_1a_i w_2a_j w_3 $ and $ w''=w_1 a_j w_2 a_i w_3$, for some 
	$i<j$ and $w_1,w_2,w_3\in\Sigma^*$. 
	
	Observe that if $\vert w_2 a_j\vert_{a_{i+1}}\geq 1$, then 
	$\vert a_iw_2a_j\vert_{a_{i}a_{i+1}}>\vert a_jw_2a_i\vert_{a_ia_{i+1}}$
	and thus $\vert w'\vert_{a_{i}a_{i+1}}>\vert w''\vert_{a_ia_{i+1}}$,
	contradicting that $w'\equiv_M w''$. This implies that $j\geq  i+2$ and 
	$\vert w_2\vert_{a_{i+1}}=0$. It can then be deduced that $a_i w_2 a_j$ contains a factor of the form $a_{i'}a_{j'}$, for some $i'\leq i$ and $j'\geq i+2$. Hence, $[w]$ is trivially distinguishable. 
\end{proof}

\begin{proposition}\label{2711d}
	Let $\Sigma=\{a_1<a_2<\dotsb<a_s\}$ and take $w\in\Sigma^*$ to be $M$\!-ambiguous.
	The minimal Hamming distance of $[w]$ cannot be three. 	
\end{proposition}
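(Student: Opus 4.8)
The plan is to argue by contradiction: suppose $d_H([w])=3$, so there are distinct $w',w''\in[w]$ with $d_H(w',w'')=3$ and no two words in $[w]$ at Hamming distance $1$ or $2$. Since a single transposition of adjacent-in-value letters would give distance $2$ (and in fact $d_H([w])\neq 1$ is automatic because $\Psi_\Sigma$ records the Parikh vector, so $M$-equivalent words are permutations of one another and cannot differ in exactly one position), I first want to pin down the combinatorial shape of a distance-$3$ pair. Because $w'$ and $w''$ have the same Parikh vector and differ in exactly three positions, the multiset of letters appearing in those three positions of $w'$ equals that of $w''$; the only way three positions can be "rearranged" among themselves is either a $3$-cycle of three \emph{distinct} letters, or a transposition of two distinct letters together with a third position that is forced to carry one of those same two letters (a pattern like $x\cdots y\cdots x$ becoming $y\cdots x\cdots x$ — but that is really a distance-$2$ change in disguise once you cancel the unchanged $x$). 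So the essential case is: $w'=w_1 a_i w_2 a_j w_3 a_k w_4$ and $w''$ obtained by cyclically permuting the three highlighted letters, where $\{i,j,k\}$ has at least two distinct values; and the "repeated letter" case should be reducible to producing a genuine distance-$2$ pair, contradicting minimality.

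Next I would exploit the Parikh-matrix invariants exactly as in the proof of Proposition~\ref{2711c}. For the repeated-letter configurations, the trick is that if $w'=w_1 x w_2 y w_3 x w_4$ and $w''=w_1 y w_2 x w_3 x w_4$ agree as claimed, then (using that the first and third highlighted symbols of $w''$ are both $x$, hence the segment $w_3$ is flanked by $x$'s in both words) the pair $w_1 x w_2 y\,(w_3 x w_4)$ versus $w_1 y w_2 x\,(w_3 x w_4)$ is a distance-$2$ pair of $M$-equivalent words, so $d_H([w])\le 2$, a contradiction. (One must also rule out $w_2$ or $w_3$ being empty in a way that collapses the distance, but that only makes things easier.) For the genuine $3$-cycle of distinct letters, without loss of generality the three letters are $a_i<a_j<a_k$ occurring in some order, and the cycle sends one cyclic arrangement to the other. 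I would then consider the two induced arrangements on the pair $\{a_i,a_{i+1}\}$ or on $\{a_i,\dots,a_k\}$ and compute the change in $\vert\cdot\vert_{a_ia_{i+1}}$, $\vert\cdot\vert_{a_ja_{j+1}}$, etc. The key point, as in Proposition~\ref{2711c}, is that permuting the relative order of $a_i$ and $a_j$ (with $i<j$) strictly changes $\vert w\vert_{a_ia_{i+1}}$ unless the intervening block is free of $a_{i+1}$, and cycling three pairwise distinct letters necessarily swaps the relative order of at least one such $i<j$ pair without being able to compensate through a second swap — one shows at least one of the relevant subsequence counts must strictly increase or decrease. Hence $w'\not\equiv_M w''$, contradiction.

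The cleanest way to organize the $3$-cycle case is to reduce to the binary alphabet generated by the two extreme values among $\{i,j,k\}$, or better, to look at the letters $a_i$ and $a_j$ where $a_i$ is the smallest value among the three highlighted positions: in $w'$ versus $w''$ the relative order of some occurrence of $a_i$ and some occurrence of a strictly larger highlighted letter gets reversed, and since at most three positions move, there is not enough freedom to also move an $a_{i+1}$ into the gap to rebalance the count $\vert\cdot\vert_{a_ia_{i+1}}$ (if such an $a_{i+1}$ existed between them, swapping would already change the count, and it is not among the three moved positions so it is the same in both words). Making this "not enough room" argument precise — carefully enumerating which of the three moved positions are involved and checking that the sign of the change in $\vert\cdot\vert_{a_ia_{i+1}}$ (or, if $a_{i+1}$ happens to coincide with $a_j$ or $a_k$, then of $\vert\cdot\vert_{a_ia_{i+2}}$ or a similar length-two or length-three count) is forced — is the main obstacle, since one has to handle the subcases according to how the three distinct values $i,j,k$ sit relative to $i+1$. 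I expect this bookkeeping, rather than any conceptual difficulty, to be the bulk of the work, and it parallels the factor-extraction step at the end of the proof of Proposition~\ref{2711c}.
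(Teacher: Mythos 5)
Your reduction to the case of a $3$-cycle of three pairwise distinct letters is sound (a derangement of three positions whose letter multiset contains a repeat is impossible, so a genuine distance-$3$ pair must cycle three distinct letters), and ruling out distance $1$ via the Parikh vector is fine. The gap is in your endgame for that main case: you aim to show $w'\not\equiv_M w''$, i.e.\ that no $M$\!-equivalent pair at Hamming distance exactly $3$ exists. That claim is false, so no amount of case bookkeeping will establish it. Over $\Sigma=\{a<b<c<d\}$ the words $dca$ and $adc$ are $M$\!-equivalent — every entry of the $5\times 5$ Parikh matrix involving $b$ vanishes, and $|dca|_{cd}=|adc|_{cd}=0$ since $d$ precedes $c$ in both — yet they differ in all three positions. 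This is precisely a $3$-cycle of the distinct letters $a,c,d$ in which both reversed pairs, $(a,c)$ and $(a,d)$, consist of non-consecutive letters with no intermediate letter between them, so every subsequence count recorded by the Parikh matrix is preserved. (Your fallback of examining $|\cdot|_{a_ia_{i+2}}$ does not help: by Theorem~\ref{1206a} only counts of \emph{contiguous} blocks $a_ia_{i+1}\dotsm a_j$ appear as matrix entries.) The proposition only asserts that $3$ cannot be the \emph{minimum} over the class, and the proof has to exploit that.

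The correct continuation — and what the paper does — is the dichotomy you half-identify but resolve the wrong way. For a cycled pair $a_j,a_k$ ($j<k$) whose relative order is reversed, either $k=j+1$ or some $a_{j+1}$ occurs between them, in which case $|\cdot|_{a_ja_{j+1}}$ genuinely changes and $w'\not\equiv_M w''$; or else $k\geq j+2$ and the intervening block is free of $a_{j+1}$, in which case that block contains an adjacent factor $a_{j'}a_{k'}$ with $j'\leq j$ and $k'\geq j+2$, so $[w]$ is trivially $\mathbb{P}$-distinguishable and hence $d_H([w])=2$ by Proposition~\ref{2711c}, contradicting the assumed minimality of $3$. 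In the example above this is exactly what happens: $dca$ contains the factor $ca$, and indeed $d_H([dca])=2$ via $dca\equiv_M dac$. Your proposal invokes minimality over the whole class only in the degenerate repeated-letter case; without invoking it in the $3$-cycle case the argument cannot close.
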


\begin{proof}
	Assume $d_H([w])=3$. Then there exist $w', w''\in [w]$ with $d_H(w',w'')=3$. 
	As the argument in each case mirrors that of Proposition~\ref{2711c}, we consider only the case $w'= w_1a_i w_2a_j w_3 a_k w_4 $ and $ w''=w_1 a_k w_2 a_i w_3 a_j w_4$, for some 
	$i<j<k$ and $w_1,w_2,w_3, w_4\in\Sigma^*$. 
	
	Observe that if $\vert w_3 a_k\vert_{a_{j+1}}\geq 1$, then 
	$\vert a_i w_2a_j w_3 a_k\vert_{a_{j}a_{j+1}}>\vert a_k w_2 a_i w_3 a_j\vert_{a_ja_{j+1}}$
	and thus $\vert w'\vert_{a_{j}a_{j+1}}>\vert w''\vert_{a_j a_{j+1}}$,
	contradicting that $w'\equiv_M w''$. This implies that $k\geq  j+2$ and 
	$\vert w_3\vert_{a_{j+1}}=0$. It can then be deduced that $a_j w_3 a_k$ contains a factor of the form $a_{j'}a_{k'}$, for some $j'\leq j$ and $k'\geq j+2$. It follows that $[w]$ is trivially distinguishable and thus 
	$d_H([w])=2$, contradicting our assumption.	
\end{proof}





For the remainder of this section, we again work on the fixed ordered alphabet $\Sigma_3=\{a<b<c\}$. Consider the word $w=ab^{11}cbabcb^5$. It can be verified that
\[[w]=\{ \, b^i a b^{11-2i} c bab^{1+2i}c b^{5-i} \mid 0\leq i\leq 5 \,\}\]
with $d_H([w])=7$ (see~\cite[Example~5.1]{atanasiu2022erasure} and~\cite[Theorem~5.2]{atanasiu2022erasure}). Our second main result  asserts $7$ as one of a few (attainable) minimal Hamming distances of $M$\!-classes over $\Sigma_3$. Before that, we need a few lemmata.

\begin{lemma}\label{0907a}
Consider $w\in \Sigma_3^*$ with $\vert \print(\pi_{a,c} (w)  )\vert \in \{2,3\}$. If $[w]$ is not trivially $\mathbb{P}$-distinguishable, then $d_H([w])=4$ unless $w$ is $M$\!-unambiguous.
\end{lemma}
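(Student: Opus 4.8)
The plan is to classify the possible words $w$ with $\vert\print(\pi_{a,c}(w))\vert\in\{2,3\}$ that are $M$-ambiguous but not trivially $\mathbb{P}$-distinguishable, and for each surviving family exhibit two $M$-equivalent words at Hamming distance $4$ while ruling out distances $2$ and $3$. By Proposition~\ref{2711c} and Proposition~\ref{2711d}, for a non-trivially distinguishable class the minimal Hamming distance is never $2$ and never $3$, so once we show $[w]$ contains two distinct words at Hamming distance exactly $4$ we are done. Thus the real content is (a) pinning down the shape of $w$, and (b) constructing the distance-$4$ pair.

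\medskip\noindent\textbf{Reducing to normal forms.} First I would handle $\vert\print(\pi_{a,c}(w))\vert=2$, say $\pi_{a,c}(w)=a^mc^n$ with $m,n\geq1$ (the reverse-order case being symmetric). Since $[w]$ is not trivially distinguishable, every length-two factor of $w$ uses consecutive letters, so $w$ has no $ac$ or $ca$ factor; writing out where the $b$-blocks sit forces $w$ to be $M$-equivalent to a word of the form $a^{m-1}b^{\alpha}ab^{\beta}cb^{\gamma}c^{n-1}$ with $\beta\geq1$, $\alpha,\gamma\geq0$, using Lemma~\ref{1011c} on the $\{a,b\}$-prefix and the $\{b,c\}$-suffix. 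For such a word to be $M$-ambiguous at all, one checks (as in Theorem~\ref{1906bb} applied to the binary projections $\pi_{a,b}$ and $\pi_{b,c}$) that one of $\alpha,\gamma$ must be nonzero in a way that creates a binary ambiguity — concretely $w$ reduces to the shape $a^{m}b^{\beta}cb^{\gamma}c^{n}$ type words where Rule $E2$ on one of the two binary sub-alphabets applies. For $\vert\print(\pi_{a,c}(w))\vert=3$, say $\print(\pi_{a,c}(w))=aca$, Lemma~\ref{311219a} gives $\vert w\vert_c=1$, and Lemma~\ref{1011c} forces $w\equiv_M a^{m-1}b^{\alpha}ab^{\beta}cb^{\gamma}ab^{\delta}a^{n-1}$ with $\beta,\gamma\geq1$. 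Again $M$-ambiguity plus non-trivial-distinguishability severely restricts the exponents.

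\medskip\noindent\textbf{Exhibiting a distance-$4$ pair.} In each surviving family the ambiguity is ``localized'' in a binary block of the form $a^{p}b^{q}ab^{r}$ or its $\{b,c\}$-analogue $b^{q}cb^{r}c^{p}$ etc. The model case is $z=ab^{2}cb^{\gamma}cb^{0}\cdots$: using Rule $E2$ (or the verification technique of Example~\ref{1111a}) one swaps a single pair $a\leftrightarrow b$ and a single pair $c\leftrightarrow b$ — changing exactly four positions — to land on an $M$-equivalent word. More precisely, for a word containing $b^{\alpha}ab^{\beta}$ with $\alpha$ even and a ``matching'' $b$-surplus elsewhere, moving one $b$ across the $a$ and one $b$ across a $c$ gives the distance-$4$ partner, exactly as in the $d_H([w])=7$ example $ab^{11}cbabcb^5$ where the generator $i\mapsto i+1$ alters four positions. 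I would verify $M$-equivalence of the constructed pair by Theorem~\ref{1206a} (matching $\vert\cdot\vert_a,\vert\cdot\vert_b,\vert\cdot\vert_c,\vert\cdot\vert_{ab},\vert\cdot\vert_{bc},\vert\cdot\vert_{abc}$), and verify the Hamming distance is $4$ by inspection of the two explicit words. Combined with Propositions~\ref{2711c} and~\ref{2711d}, this yields $d_H([w])=4$.

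\medskip\noindent\textbf{Main obstacle.} The hard part is the bookkeeping in the normal-form reduction: showing that, after modding out by the non-trivial-distinguishability hypothesis, the only $M$-ambiguous words left really do all admit a distance-$4$ representative, and that no exotic configuration slips through with larger minimal distance. In particular one must be careful that an $M$-ambiguous word of this $\print$-length is not secretly forced to have its ambiguity spread so that every two representatives differ in more than four positions — this is where a clean case split on which of the exponents $\alpha,\beta,\gamma,\delta$ (or $m,n$) are zero, together with the binary classification in Theorem~\ref{1906bb} and Lemma~\ref{1011c}, does the work. Once the normal forms are nailed down, the distance-$4$ construction is a short computation in each case.
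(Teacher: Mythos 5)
Your skeleton is right --- use Propositions~\ref{2711c} and~\ref{2711d} to exclude distances $2$ and $3$, then either exhibit a distance-$4$ pair or show $M$\!-unambiguity --- but there are two genuine gaps. First, your distance-$4$ move is not a valid $M$\!-equivalence. You propose swapping ``a single pair $a\leftrightarrow b$ and a single pair $c\leftrightarrow b$''; a single adjacent transposition $ab\to ba$ changes $\vert\cdot\vert_{ab}$ by one and a single $bc\to cb$ changes $\vert\cdot\vert_{bc}$ by one, and these cannot compensate each other since both counts must be preserved separately. The correct move is a full application of Rule $E2$ \emph{inside one binary sub-alphabet}: writing $w=uab^{\beta}cv$ with $u\in\{a,b\}^*$ and $v\in\{b,c\}^*$, if the prefix $uab^{\beta}$ is $M$\!-ambiguous over $\{a<b\}$ (or the suffix $b^{\beta}cv$ over $\{b<c\}$), Theorem~\ref{1906bb} provides an $E2$ swap $ab\cdots ba\to ba\cdots ab$ entirely within that piece, which changes exactly four positions and preserves the ternary Parikh matrix because the middle factor contains no $c$ (resp.\ no $a$). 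Your appeal to the $d_H([w])=7$ example as a model where the generator ``alters four positions'' is also mistaken: that generator alters seven or eight positions, which is precisely why that class has minimal distance $7$ rather than $4$.

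Second, and more seriously, the case you flag as the ``main obstacle'' --- a word both of whose binary pieces are $M$\!-unambiguous yet which is ternary-$M$\!-ambiguous, possibly with all partners far apart --- is exactly the case your argument does not close, and no amount of exponent bookkeeping with Lemma~\ref{1011c} and Theorem~\ref{1906bb} alone will close it, because ternary $M$\!-ambiguity can arise from genuinely ternary transformations (Rule $E2.2$) invisible to the binary projections. The paper resolves this by observing that when both pieces are binary-unambiguous, $w$ is forced into one of nine explicit shapes (for each print), and then checking each shape against the published complete classification of $M$\!-unambiguous ternary words (Remark~\ref{rem:M-unambiguity}); all nine turn out to be $M$\!-unambiguous, which is why the statement carries the ``unless $w$ is $M$\!-unambiguous'' caveat. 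Without invoking that classification (or reproving the relevant fragment of it), your argument cannot rule out the exotic configuration you yourself identify.
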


\begin{proof}
Suppose 	$[w]$ is not trivially distinguishable. 
First, we consider the case $\print(\pi_{a,c} (w)  ) = ac$. 
Then $w= uab^\beta cv$, for some $u\in \{a,b\}^*, v\in \{b,c \}^*$, and $\beta\geq 1$.
As $d_H([w])\notin \{2,3\}$ by Propositions~\ref{2711c} and~\ref{2711d}, it follows that if $uab^\beta$ is \mbox{$M$\!-ambiguous} over $\{a<b\}$ or $b^\beta c v$ is $M$\!-ambiguous over $\{b<c\}$, then, through the use of Rule $E2$, the distance between some two words in this $M$\!-class is $4$, hence $d_H([w])=4$.
Suppose $d_H([w])\neq 4$. Then $uab^\beta$ must be \mbox{$M$\!-unambiguous} and thus, by Theorem~\ref{1906bb}, it is of one of the following possibilities $b^\alpha a b^\beta, a^\alpha b^\beta$, or $a^{\alpha}bab^\beta$, for some $\alpha\geq 1$. Similarly, $b^\beta c v$ is either $b^\beta c b^\gamma, b^\beta c^\gamma$,  or $b^\beta cb c^\gamma$, for some $\gamma \geq 1$. Therefore, $w$ has one of the following nine forms: 
\[
\begin{gathered}
b^\alpha a b^\beta c b^\gamma,  b^\alpha a b^\beta c^{\gamma},  b^\alpha a b^\beta cbc^{\gamma}, a^\alpha b^\beta c b^\gamma,  a^{\alpha} b^\beta c^{\gamma}, \\ a^{\alpha}b^\beta cbc^{\gamma},a^{\alpha}bab^\beta cb^\gamma,  a^{\alpha}bab^\beta c^{\gamma},  a^{\alpha}bab^\beta cbc^{\gamma}.
\end{gathered} 
\]
However, all of these words are $M$\!-unambiguous over $\Sigma_3$ (see Remark~\ref{rem:M-unambiguity}).

Now, consider the case $\print(\pi_{a,c} (w)  ) = aca$. Then $w= uab^\beta cb^\gamma a v$, for some $u, v\in \{a,b\}^*$ and $\beta, \gamma\geq 1$. Suppose $d_H([w])\neq 4$.
Similarly, $uab^\beta$ must be \mbox{$M$\!-unambiguous} over $\{a<b\}$ and thus it is $b^\alpha a b^\beta, a^{\alpha} b^\beta$, or $a^{\alpha}bab^\beta$, for some $\alpha\geq 1$, and $b^\gamma av$ is 
	$b^\gamma a b^\delta, b^\gamma a^{\delta}$, or $b^\gamma ab a^{\delta}$, for some $\delta \geq 1$. Therefore, $w$ has one of the following nine forms: 
	\[
	\begin{gathered}
	b^\alpha a b^\beta c  b^\gamma a b^\delta, b^\alpha a b^\beta c b^\gamma a^{\delta}, b^\alpha a b^\beta cb^\gamma aba^{\delta},
	a^{\alpha}b^\beta c b^\gamma ab^\delta, a^{\alpha} b^\beta c b^\gamma a^{\delta},\\
	a^{\alpha} b^\beta cb^\gamma a b a^{\delta},a^{\alpha} bab^\beta cb^\gamma a b^\delta, a^{\alpha} bab^\beta cb^\gamma a^{\delta}, a^{\alpha}  bab^\beta cb^\gamma ab a^{\delta}.
	\end{gathered}
	\] 
	However, again, all of them are $M$\!-unambiguous over $\Sigma_3$ (see Remark~\ref{rem:M-unambiguity}).
	
 The cases where $\print(\pi_{a,c} (w)  )= ca$ or $cac$ are analogous. 
\end{proof}

\begin{lemma}\label{080823a}
Consider $w, w'\in \Sigma_3^*$ with $w\neq w'$ such that  $\pi_{a,c}(w)= \pi_{a,c}(w')\in \{acac, caca\}$. 
 If $w\equiv_M w'$ and the $M$\!-class $[w]$ is not trivially $\mathbb{P}$-distinguishable,  then $d_H(w,w')\in \{7,8\}$.	
\end{lemma}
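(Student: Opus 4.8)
The plan is to use Lemma~\ref{1011c} together with the structural assumption that $[w]$ is not trivially distinguishable to pin down the shapes of $w$ and $w'$, and then apply Lemma~\ref{0507b} to reduce the exponents before computing the Hamming distance directly. Since $\pi_{a,c}(w) = \pi_{a,c}(w') \in \{acac, caca\}$, assume without loss of generality $\pi_{a,c}(w) = acac$; by Lemma~\ref{311219a} and non-trivial distinguishability, $a$ and $c$ alternate in $\pi_{a,c}(w)$ with no repeated letter, so $\pi_{a,c}(w) = acac$ exactly (no powers), i.e.\ $|w|_a = |w|_c = 2$ and all occurrences of $a,c$ are separated only by blocks of $b$'s. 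Thus $w$ has the form $b^{\alpha} a b^{\beta} c b^{\gamma} a b^{\delta} c b^{\varepsilon}$ for some exponents with $\beta,\gamma,\delta \geq 1$ and $\alpha,\varepsilon \geq 0$; otherwise an $ac$ or $ca$ factor would appear. By Lemma~\ref{0507b} applied to the factor $a b^{\beta} c b^{\gamma} a b^{\delta} c$ (after trimming the $b$-prefix/suffix and using Remark~\ref{2912a}), or by a direct argument in its spirit, I expect $\beta = \delta = 1$ is forced, so $w = b^{\alpha} a b c b^{\gamma} a b c b^{\varepsilon}$; similarly $w' = b^{\alpha'} a b c b^{\gamma'} a b c b^{\varepsilon'}$.

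Next I would use $w \equiv_M w'$ to constrain the parameters. Comparing Parikh vectors gives $\alpha + \gamma + \varepsilon = \alpha' + \gamma' + \varepsilon'$ (total number of $b$'s, since the two $bc$- and two $b$-internal occurrences are fixed — actually recount: $w$ has $\alpha + 1 + 1 + \gamma + 1 + 1 + \varepsilon$ letters $b$, so $\alpha+\gamma+\varepsilon = \alpha'+\gamma'+\varepsilon'$). Comparing $|w|_{ab}$, $|w|_{bc}$, and $|w|_{abc}$ via Theorem~\ref{1206a} yields two or three further linear relations; I expect these to force $\gamma = \gamma'$ and $\alpha + \varepsilon = \alpha' + \varepsilon'$, reducing the family to a one-parameter orbit $w_i = b^{\alpha - i}\, a b c b^{\gamma} a b c\, b^{\varepsilon + i}$ (or similar), parametrized so that moving a single $b$ across the word from the front block to the back block traces out $[w]$ — this mirrors the $ab^{11}cbabcb^5$ example in the text. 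The Hamming distance between two distinct such words is then a finite computation: aligning $b^{\alpha-i} a b c b^\gamma a b c b^{\varepsilon+i}$ against $b^{\alpha-j} a b c b^\gamma a b c b^{\varepsilon+j}$ with $i \neq j$, the mismatches occur around the boundaries of the three moved $b$-blocks and the two $abc$-segments that get shifted; one carefully counts the displaced positions and checks the total is $7$ when the shift amount is $1$ and the segment $abc$ is length $3$ (giving $7 = 2\cdot 3 + 1$, as in the cited example) and $8$ otherwise, or $7$ and $8$ are the only two values that occur depending on parity/overlap of the shift.

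The main obstacle I anticipate is the bookkeeping in the final Hamming-distance count: after the reductions, $[w]$ is a chain of words obtained by sliding a block of $b$'s past the rigid pattern $abcbab c$ (length $7$), and I must verify that any two distinct members differ in exactly $7$ or exactly $8$ positions — in particular that no alignment accidentally produces fewer matches or a different value. The case analysis splits on whether the two words are "adjacent" in the sliding chain (distance $7$, by the length-$7$ rigid block being shifted by one $b$) or "non-adjacent" (distance $8$, since a longer shift still only disturbs the two length-$3$ segments $abc$ plus one extra position, but now a second boundary also contributes). A secondary obstacle is confirming that Lemma~\ref{0507b} genuinely applies after the prefix/suffix trimming — one must check that the trimmed factor $ab^\beta cb^\gamma ab^\delta c a$ (restoring a boundary $a$) or an $M$-equivalent rearrangement still lies in a non-trivially-distinguishable class, which follows from Remark~\ref{2912a} since appending or prepending letters cannot destroy non-trivial distinguishability of the ambient word. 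Once the shape is nailed down, the distance computation is routine, and the symmetric case $\pi_{a,c}(w) = caca$ is handled identically by the $a \leftrightarrow c$ symmetry.
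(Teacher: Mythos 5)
There is a genuine gap, and it is fatal to the proposed route. Your opening reduction — that $w$ must have the form $b^{\alpha}ab^{\beta}cb^{\gamma}ab^{\delta}cb^{\nu}$ with $\beta,\gamma,\delta\geq 1$ — is correct and matches the paper. But the next step, invoking Lemma~\ref{0507b} to force $\beta=\delta=1$, fails. Lemma~\ref{0507b} applies only to words whose $a,c$-projection has print $acaca$ or $cacac$ (note the trailing $a$, resp.\ $c$, in its hypothesis); the factor $ab^{\beta}cb^{\gamma}ab^{\delta}c$ you want to feed it has print $acac$ and is simply not of the required shape, and no trimming via Remark~\ref{2912a} fixes that. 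Worse, the conclusion you want is false: the paper's own example $w=ab^{11}cbabcb^{5}$ has $\pi_{a,c}(w)=acac$, $[w]$ not trivially distinguishable, and $\beta=11$. In fact, if $\beta=\delta=1$ were forced for both $w$ and $w'$, then comparing $|{\cdot}|_{abc}$, $|{\cdot}|_{ab}$ and $|{\cdot}|_{b}$ would successively give $\gamma=\gamma'$, $\nu=\nu'$, $\alpha=\alpha'$, i.e.\ $w=w'$, making the lemma vacuous — which it is not. Consequently the "rigid sliding block" picture underlying your Hamming count is wrong: in the actual class $\{\,b^{i}ab^{11-2i}cbab^{1+2i}cb^{5-i}\,\}$ the two inner $b$-blocks change length between members, adjacent members are at distance $8$, and distance $7$ occurs when the first $c$ of one word lands on the position of the second $a$ of the other (e.g.\ $i$ and $i+2$) — the opposite of your adjacent/non-adjacent dichotomy.

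The paper's proof keeps all six exponents general. It first rules out $\alpha=\alpha'$ (else the suffixes $b^{\beta}cb^{\gamma}ab^{\delta}cb^{\nu}$ would be $M$\!-equivalent and $M$\!-unambiguous, hence equal), then applies Theorem~\ref{1906bb} to the binary projections $\pi_{a,b}(w)\equiv_M\pi_{a,b}(w')$ and $\pi_{b,c}(w)\equiv_M\pi_{b,c}(w')$, together with $|w|_{abc}=|w'|_{abc}$ and $|w|_b=|w'|_b$, to extract the linear relations $\gamma=\gamma'$, $\alpha+\nu=\alpha'+\nu'$, $\beta'-\beta=\delta-\delta'$, plus strict inequalities locating the four non-$b$ letters of $w$ relative to those of $w'$; the distance is then $8$ in general and $7$ exactly in the coincidence case above. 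If you want to salvage your approach, you would need to replace the false normalization by this system of constraints derived from the projections.
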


\begin{proof}
We may assume $\pi_{a,c}(w)= \pi_{a,c}(w')= acac$ as the other case is analogous.
Suppose $w\equiv_M w'$ and $[w]$ is not trivially distinguishable.
Then $w= b^\alpha a b^\beta c  b^\gamma a b^\delta c b^\nu$ and $w'= b^{\alpha'} a b^{\beta'} c  b^{\gamma'} a b^{\delta'} c b^{\nu'}$, for some $\beta,\gamma,\delta, \beta', \gamma',\delta' \geq 1$ and $\alpha, \nu, \alpha', \nu'\geq 0$.
If $\alpha =\alpha'$, then 
letting $v= b^\beta c  b^\gamma a b^\delta c b^\nu$ and $v'= b^{\beta'} c  b^{\gamma'} a b^{\delta'} c b^{\nu'}$, by Remark~\ref{2912a} we see that $v\equiv_M v'$. However, since using Remark~\ref{rem:M-unambiguity} we know that $v$ is $M$\!-unambiguous, we conclude that $v=v'$, which contradicts in this case that $w\neq w'$. Hence, we may assume $\alpha < \alpha'$.

Since $\pi_{a,b}(w)
=b^\alpha a b^{\beta+\gamma} a b^{\delta+\nu}\equiv_M
b^{\alpha'} a b^{\beta'+\gamma'} a b^{\delta'+\nu'} = \pi_{a,b}(w')$, from the first part of Theorem~\ref{1906bb}
it follows that
$\alpha'-\alpha= (\delta'+\nu')-(\delta+\nu)$
and $\alpha+\beta +\gamma > \alpha'+\beta'+\gamma'$.
Similarly, from  $\pi_{b,c}(w)
=b^{\alpha +\beta} c b^{\gamma+\delta} cb^\nu\equiv_M
b^{\alpha' +\beta'} c b^{\gamma'+\delta'} cb^{\nu'} = \pi_{b,c}(w')$, we obtain
$(\alpha'+\beta') - (\alpha+\beta)= \nu'-\nu$.
Hence, $\beta'-\beta = \delta-\delta'$.
Now, since $\vert w\vert_{abc}= \vert w'\vert_{abc}$, we have
$2\beta +\gamma +2\delta= 2\beta' +\gamma' +2\delta'$, and it follows that $\gamma= \gamma'$. Furthermore, $\alpha+\nu = \alpha'+\nu'$ because  $\vert w\vert_b=\alpha+\beta +\gamma +\delta+\nu= \alpha'+\beta'+\gamma'+\delta'+\nu'=\vert w'\vert_b$.

Now, $\alpha < \alpha'$ implies that
$\nu >\nu' $. It follows that
$\alpha+\beta >\alpha'+\beta'$ and $\alpha+\beta +\gamma +\delta< \alpha'+\beta'+\gamma'+\delta'$. Having these two inequalities together with $\alpha<\alpha'$ and $\alpha+\beta +\gamma > \alpha'+\beta'+\gamma'$, it can now be verified that $d_H(w,w') =8$ unless 
$\alpha+\beta+2= \alpha'+\beta'+\gamma'+3$; that is, when the position of the first $c$ in $w$ coincides with that of the second $a$ in $w'$. In that exceptional case, $d_H(w,w')=7$. 	
\end{proof}

\begin{lemma}\label{0907b}
Let $w\in \Sigma_3^*$ be \mbox{$M$\!-ambiguous} with $\vert\print(\pi_{a,c} (w))\vert=4$. If the $M$\!-class $[w]$ is not trivially $\mathbb{P}$-distinguishable, then $d_H([w])\in \{4, 7,8\}$.
\end{lemma}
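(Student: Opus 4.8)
The plan is to...

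The plan is to separate the analysis according to $\pi_{a,c}$. First, since $w$ is $M$-ambiguous and $[w]$ is not trivially distinguishable, Theorem~\ref{0507a} shows $[w]$ is $\mathbb{P}$-indistinguishable, so $\pi_{a,c}(v)\equiv_M\pi_{a,c}(w)$ over $\{a<c\}$ for every $v\in[w]$. As $|\print(\pi_{a,c}(w))|=4$ and $[w]$ is not trivially distinguishable, Lemma~\ref{311219a} forces $\pi_{a,c}(w)$ to be $a^mcac^n$ or $c^maca^n$ for some $m,n\ge1$, and since $a^mcac^n$ is of the $M$-unambiguous shape $a^\alpha bab^\beta$ from Theorem~\ref{1906bb}, in fact $\pi_{a,c}(v)=\pi_{a,c}(w)$ literally for every $v\in[w]$; by the left--right symmetry of $\Sigma_3$ assume it is $a^mcac^n$. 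Concretely, every $v\in[w]$ then has the form $v=u\,c\,b^{r}\,a\,b^{s}\,c\,b^{t_2}c\cdots c\,b^{t_{n+1}}$ with $u\in\{a,b\}^*$ ending in $b$, $|u|_a=m$, $r,s\ge1$, and $t_2,\dots,t_{n+1}\ge0$. If $m=n=1$, then $\pi_{a,c}(v)=acac$ for every $v\in[w]$, and since $w$ is $M$-ambiguous there is a word $v\ne w$ in $[w]$; Lemma~\ref{080823a} applies to every pair of distinct words of $[w]$ and gives $d_H([w])\in\{7,8\}$.

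From now on suppose $m+n\ge3$; by symmetry take $m\ge2$. By Propositions~\ref{2711c} and~\ref{2711d}, $d_H([w])\notin\{2,3\}$, so $d_H([w])\ge4$, and it suffices either to exhibit two words of $[w]$ at Hamming distance exactly $4$ or to prove $d_H([w])\in\{7,8\}$. If for some $v\in[w]$ the prefix $u=u(v)$ is $M$-ambiguous over $\{a<b\}$ --- which, by Theorem~\ref{1906bb}, happens unless $u\in\{a^mb^q,\;a^{m-1}bab^q\}$ --- then, because $u\in\{a,b\}^*$, a single application of Rule $E2$ inside $u$ is simultaneously a valid Rule $E2$ step in $\Sigma_3$, producing a distinct word of $[w]$ differing from $v$ in exactly four positions; hence $d_H([w])=4$. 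Symmetrically, if $n\ge2$ and the suffix $b^{s}cb^{t_2}c\cdots cb^{t_{n+1}}$ of some $v\in[w]$ is $M$-ambiguous over $\{b<c\}$ (which happens unless it equals $b^sc^n$ or $b^scbc^{n-1}$), a single Rule $E2$ step there gives $d_H([w])=4$.

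It remains to treat the case in which no such local move exists, so by Theorem~\ref{1906bb} each $u(v)$ equals $a^mb^q$ or $a^{m-1}bab^q$ and, when $n\ge2$, each suffix equals $b^sc^n$ or $b^scbc^{n-1}$; the entire $M$-ambiguity of $w$ is then ``ternary'', realized by displacing $b$'s across the $c$'s around $A_{m+1}$. Here I would run a short case analysis on the exponents and show that either the ternary Rule $E2.2$ applies to a bounded window of a suitable $v\in[w]$ while altering exactly four positions (so $d_H([w])=4$), or $[w]$ is one of finitely many rigid classes for which a direct position count, of the same flavour as the proof of Lemma~\ref{080823a}, yields $d_H([w])\in\{7,8\}$.

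The hard part is exactly this last case: one must show that when $m\ge2$ and the outer blocks are pinned as above, the residual ternary slack of $w$ always sits where Rule $E2.2$ changes exactly four letters --- never more --- and one must control the leftover rigid classes by the careful positional bookkeeping used for Lemma~\ref{080823a}; it is that bookkeeping which confines the minimal distance to $\{4,7,8\}$.
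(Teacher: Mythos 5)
Your setup matches the paper's: reduce to $\pi_{a,c}(v)=a^mcac^n$ for all $v\in[w]$ via Theorem~\ref{0507a}, Lemma~\ref{311219a} and the $M$-unambiguity of $a^mcac^n$ over $\{a<c\}$; dispose of $d_H\in\{2,3\}$ by Propositions~\ref{2711c} and~\ref{2711d}; and observe that if the maximal $\{a,b\}$-prefix or $\{b,c\}$-suffix of some $v\in[w]$ is $M$-ambiguous over the corresponding binary alphabet, a single Rule $E2$ step changes exactly four positions, giving $d_H([w])=4$. The case $m=n=1$ via Lemma~\ref{080823a} is also correct. But the remaining case --- outer blocks pinned to $a^mb^q$ or $a^{m-1}bab^q$ and $b^sc^n$ or $b^scbc^{n-1}$ --- is exactly where you stop: you announce ``a short case analysis'' involving Rule $E2.2$ and ``finitely many rigid classes'' without carrying it out, and you yourself flag this as the hard part. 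That is a genuine gap, not a routine verification: it is unclear that an $E2.2$ move always alters exactly four letters, and the ``rigid'' classes are not finitely many (the exponents $m,n,q,s$ and the inner block lengths are unbounded), so the proposed bookkeeping has no obvious termination.

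The idea you are missing is much simpler than what you sketch. Once the outer blocks are forced to be $M$-unambiguous, \emph{every} word of $[w]$ begins with the literal prefix $a^{m-1}$ (both admissible prefix shapes $a^mb^q$ and $a^{m-1}bab^q$ do) and, when $n\ge2$, ends with the literal suffix $c^{n-1}$. For any two distinct $z,z'\in[w]$ you may therefore cancel this common prefix and suffix: the truncated words $\bar z,\bar z'$ remain $M$-equivalent (Parikh matrices are invertible, cf.\ Remark~\ref{2912a}), satisfy $d_H(\bar z,\bar z')=d_H(z,z')$ since the deleted letters occupy identical positions, have $\pi_{a,c}(\bar z)=\pi_{a,c}(\bar z')=acac$, and $[\bar z]$ is still not trivially distinguishable (a factor $ac$ or $ca$ in a word of $[\bar z]$ would lift to one in $[w]$). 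Lemma~\ref{080823a} then gives $d_H(z,z')\in\{7,8\}$ for every such pair, hence $d_H([w])\in\{7,8\}$, with no appeal to Rule $E2.2$ at all. Until you either supply this reduction or actually execute the case analysis you allude to, the proof is incomplete.
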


\begin{proof}
We may assume $\print(\pi_{a,c} (w))= acac$ as the other case is analogous. 
Suppose $[w]$ is not trivially distinguishable and $d_H([w])\neq 4$. Then 
$\pi_{a,c}(w)= a^mcac^n$, for some $m,n\geq 1$.
We need to show that $d_H([w])\in \{7,8\}$. 

Assume that $z$ and $z'$ are arbitrary distinct elements of $[w]$. By Theorem~\ref{0507a}, $[w]$ is indistinguishable and thus $\pi_{a,c}(z)\equiv_M \pi_{a,c} (w) \equiv_M \pi_{a,c}(z')$ over $\{a<c\}$, implying that $\pi_{a,c}(z)= \pi_{a,c}(z')= a^mcac^n$ as this word is $M$\!-unambiguous over $\{a<c\}$. 
 Then $z= uab^\beta cb^\gamma a b^\delta c v$  and $z'= u'ab^{\beta'} cb^{\gamma'} a b^{\delta'} c v'$,  for some $u,u'\in \{a,b\}^*$, $v,v'\in \{b,c\}^*$, and $\beta, \gamma, \delta, \beta', \gamma', \delta' \geq 1$.

Since $d_H([w])\notin \{2,3\}$ by Propositions~\ref{2711c} and~\ref{2711d}, it follows that both $uab^\beta$ and $u'ab^{\beta'}$ are \mbox{$M$\!-unambiguous} over $\{a<b\}$ since, otherwise, $d_H([w])= 4$ and this violates our supposition.
Hence, if $m\geq 2$, then $uab^\beta$ is $a^mb^{\beta}$ or $a^{m-1}bab^{\beta}$, while
 $u'ab^{\beta'}$ is $a^mb^{\beta'}$ or $a^{m-1}bab^{\beta'}$, and thus $a^{m-1}$ is a prefix  of both $z$ and $z'$. Similarly, if $n\geq 2$, then $b^\delta cv $ is  $b^{\delta}c^n$ or $b^\delta cbc^{n-1}$, while
 $b^{\delta'} cv' $ is  $b^{\delta'}c^n$ or $b^{\delta'} cbc^{n-1}$, and thus
 $c^{n-1}$ is a suffix of both $z$ and $z'$. 
  Following Remark~\ref{2912a}, canceling out the prefix $a^{m-1}$ and the suffix $c^{n-1}$ from both $z$ and $z'$ if applicable, we are left with distinct $\bar{z}$ and $\bar{z}'$ such that $\pi_{a,c}(\bar{z})=\pi_{a,c}(\bar{z}')= acac$, $\bar{z} \equiv_M \bar{z}'$, and $[\bar{z}]$ is not trivially distinguishable.  By Lemma~\ref{080823a},  $d_H(\bar{z},\bar{z}')\in \{7,8\}$. Since $d_H(z,z')=d_H(\bar{z},\bar{z}')$, it follows that $d_H([w])\in \{7,8\}$. 
\end{proof}

We are ready for our second main theorem, which was conjectured in~\cite{atanasiu2022erasure}.

\begin{theorem}\label{1911a}
	Let $w\in \Sigma_3^*$ be $M$\!-ambiguous. 
	Then $d_H([w])\in \{2,4,7,8\}$.	
\end{theorem}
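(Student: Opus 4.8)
The plan is to dispatch the proof by a case analysis on the quantity $n_w := \vert \print(\pi_{a,c}(w)) \vert$, which measures the complexity of the $\{a,c\}$-projection of $w$, combined with a reduction to the trivially distinguishable case. First I would observe that $n_w \geq 1$ always (else $w \in \{b\}^*$ is $M$-unambiguous, contradicting $M$-ambiguity), and in fact $n_w \geq 2$, since $n_w = 1$ forces $w \in \{a,b\}^* \cup \{b,c\}^*$, and there—by Theorem~\ref{1906bb} applied to the binary sub-alphabet containing $a$ or $c$ together with $b$—any two $M$-equivalent words differ by applications of Rule $E2$, giving $d_H([w]) = 2$. So from now on we assume $n_w \geq 2$. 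The key dichotomy is then: either $[w]$ is trivially $\mathbb{P}$-distinguishable, in which case Proposition~\ref{2711c} immediately gives $d_H([w]) = 2$ and we are done; or $[w]$ is not trivially distinguishable, and we must show $d_H([w]) \in \{4,7,8\}$.

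Assuming $[w]$ is not trivially distinguishable, I would split according to $n_w$. Note first that by Lemmas~\ref{1011a},~\ref{2906b}, and~\ref{1011b}, if $n_w \geq 5$ then (not being trivially distinguishable) $w$ is forced to be $M$-unambiguous, contradicting $M$-ambiguity; so actually $n_w \in \{2,3,4\}$. When $n_w \in \{2,3\}$, Lemma~\ref{0907a} applies directly: since $w$ is $M$-ambiguous it is not $M$-unambiguous, hence $d_H([w]) = 4$. When $n_w = 4$, Lemma~\ref{0907b} gives $d_H([w]) \in \{4,7,8\}$. Combining the three sub-cases, whenever $[w]$ is not trivially distinguishable we get $d_H([w]) \in \{4,7,8\}$, and together with the trivially distinguishable case ($d_H([w]) = 2$) this yields $d_H([w]) \in \{2,4,7,8\}$, as claimed.

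One subtlety I would make sure to handle carefully: in Lemmas~\ref{0907a},~\ref{0907b} the hypothesis is phrased in terms of $\vert\print(\pi_{a,c}(w))\vert$ for the specific word $w$, but $\print(\pi_{a,c}(\cdot))$ is \emph{not} an $M$-invariant in general—only when $[w]$ is not trivially distinguishable does Theorem~\ref{0507a} (indistinguishability) guarantee that every $v \in [w]$ has $\pi_{a,c}(v) \equiv_M \pi_{a,c}(w)$ over $\{a<c\}$, and since the relevant projected words ($a^m c^n$, $a^m c a^n$, $a^m c a c^n$, etc.) are themselves $M$-unambiguous over $\{a<c\}$, this pins down $\pi_{a,c}(v)$ exactly and hence $n_v = n_w$ for all $v$ in the class. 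So in the non-trivially-distinguishable regime there is no ambiguity about "which $n_w$ we are in," and the case split is well-defined.

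I do not expect a genuine obstacle here: the theorem is essentially an assembly of the lemmas already proved in Sections~\ref{sec:3} and~\ref{sec:4}. The one place demanding a little care—and the closest thing to a "hard part"—is verifying that the case $n_w \geq 5$ is genuinely vacuous under $M$-ambiguity plus non-trivial-distinguishability, i.e.\ correctly invoking Lemmas~\ref{1011a},~\ref{2906b},~\ref{1011b} to eliminate it, and, in the $n_w=1$ case, correctly reducing to the binary characterization of Theorem~\ref{1906bb} to conclude $d_H([w])=2$ rather than something smaller (which is impossible since $d_H$ of an $M$-ambiguous class is at least $2$, as a single transposition cannot preserve the Parikh matrix). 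Everything else is bookkeeping over the finitely many projection shapes.
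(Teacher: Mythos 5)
Your overall strategy is the paper's own: reduce to the dichotomy ``trivially distinguishable $\Rightarrow d_H=2$'' versus ``not trivially distinguishable,'' then case-split on $\vert\print(\pi_{a,c}(w))\vert$ and assemble Lemmas~\ref{1011a},~\ref{2906b},~\ref{1011b} (to kill $\vert\print(\pi_{a,c}(w))\vert\geq 5$) with Lemmas~\ref{0907a} and~\ref{0907b} (for the values $2,3,4$). That part is correct, and your remark that $\pi_{a,c}$ is pinned down across the whole class in the indistinguishable regime is a legitimate (if not strictly necessary) point of care.

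There is, however, one genuinely wrong step: in the case $\vert\print(\pi_{a,c}(w))\vert=1$, i.e.\ $w\in\{a,b\}^*$ or $w\in\{b,c\}^*$, you claim that Theorem~\ref{1906bb} ``gives $d_H([w])=2$.'' It does not. A single application of Rule $E2$ replaces a factor $a_ja_{j+1}$ by $a_{j+1}a_j$ \emph{and} a later factor $a_{j+1}a_j$ by $a_ja_{j+1}$, so it alters exactly four positions, not two. Moreover $d_H([w])=2$ is impossible here: by Proposition~\ref{2711c} that would force $[w]$ to be trivially $\mathbb{P}$-distinguishable, which cannot happen for a class contained in $\{a,b\}^*$ (or $\{b,c\}^*$), since no word in it has an $ac$ or $ca$ factor. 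The correct value in this case is $d_H([w])=4$: one $E2$-application yields two class members at distance $4$, while distances $2$ and $3$ are excluded by Propositions~\ref{2711c} and~\ref{2711d}. Since $4\in\{2,4,7,8\}$ your final conclusion is unharmed, but the step as written asserts a false equality and should be repaired as above; with that one-line fix your proof coincides with the paper's.
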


\begin{proof}
 If $[w]$ is trivially distinguishable, then $d_H([w])=2$. 
Suppose $[w]$ is not trivially distinguishable. By Lemmas~\ref{1011a},~\ref{2906b}, and~\ref{1011b},  $\vert \print(\pi_{a,c} (w)) \vert \leq 4$. 
Clearly, $\vert\print (\pi_{a,c}(w))\vert\geq 1$ or else $w$ is \mbox{$M$\!-unambiguous}.
If 	$\print(\pi_{a,c} (w)) \in \{a,c\}$, then $w\in \{a,b\}^*$ or $w\in \{b,c\}^*$. In either case, $d_H([w])=4$.
The cases where $\vert \print(\pi_{a,c} (w)) \vert \in \{2,3,4\}$ are addressed by Lemmas~\ref{0907a} and~\ref{0907b}. 
\end{proof}

\section{Conclusion}

It was conjectured in~\cite{serbanuta2006injectivity}, that for any ordered alphabet $\Sigma$ and any $u,v\in\Sigma^*$ and $a\in\Sigma$, if $uav$ is $M$\!-ambiguous, then $uaav$ is also $M$\!-ambiguous. This is true for the ternary alphabet, but counterexamples were later found for the quaternary alphabet~\cite{teh2018strongly}. Hence, our work here presents yet another evidence where some promising conjectures in Parikh matrices may turn out to be true for the ternary alphabet but false for the quaternary alphabet. Interested readers are referred to~\cite{poovanandran2020ambiguity} for a further study on the possible alterations of the \mbox{$M$\!-ambiguity} property by repeated duplication of letters in a word.

Regarding our contribution, we showed that Conjecture~\ref{160720a} is false for ordered alphabets with more than $3$ letters. Now, due to Theorem~\ref{0507a}, we get the following characterization:

\begin{theorem}
Let $w\in \Sigma_3^*$ be $M$\!-ambiguous. The $M$\!-equivalence class $[w]$ is \mbox{$\mathbb{P}$-distinguishable} if and only if $[w]$ is trivially $\mathbb{P}$-distinguishable.	
\end{theorem}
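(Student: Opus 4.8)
The final statement is essentially an immediate restatement of Theorem~\ref{0507a} together with the trivial converse, so the plan is short.

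The plan is to observe that one direction is definitional: if $[w]$ is trivially $\mathbb{P}$-distinguishable, then by the very definition (immediately following Definition~\ref{2511a}) there exists $w'\in[w]$ containing a factor $xy$ with $x,y$ non-consecutive in the ordering of $\Sigma_3$; applying the projection $\pi_{\{x,y\}}$ already separates $w'$ from a suitably chosen amiable word (concretely, from the word obtained by swapping that adjacent pair, which is still $M$-equivalent by Rule $E1$), so $[w]$ is $\mathbb{P}$-distinguishable. Hence trivial $\mathbb{P}$-distinguishability implies $\mathbb{P}$-distinguishability with no work.

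For the converse, I would simply invoke Theorem~\ref{0507a}: if $[w]$ is $\mathbb{P}$-distinguishable then $[w]$ is trivially $\mathbb{P}$-distinguishable. Combining the two directions yields the biconditional for $M$-ambiguous $w\in\Sigma_3^*$, which is exactly the claimed characterization. Thus the proof is a two-line argument citing earlier material.

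There is no real obstacle here, since all the substance was already carried out in proving Theorem~\ref{0507a} (via Lemmas~\ref{311219a}, \ref{2906c}, \ref{1011a}, \ref{0507b}, \ref{2906b}, \ref{1011b}). The only thing to be slightly careful about is making the easy forward direction precise—namely, confirming that a non-consecutive adjacent factor $xy$ genuinely forces $\pi_{\{x,y\}}(w')\neq\pi_{\{x,y\}}(w'')$ for some $w''\in[w]$; one can take $w''$ to differ from $w'$ only by transposing that pair $xy\mapsto yx$ (legitimate by Rule $E1$ since $x,y$ are non-consecutive hence differ by at least $2$ in index), and then the two projections differ in that position, so indeed $w'$ and $w''$ are $\mathbb{P}$-distinct. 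This completes the proof.
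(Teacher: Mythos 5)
Your proposal is correct and matches the paper's route exactly: the paper obtains this theorem immediately from Theorem~\ref{0507a}, with the forward implication already built into the definition of trivial $\mathbb{P}$-distinguishability. One small precision point: $\mathbb{P}$-distinctness requires the Parikh matrices of the projections to differ, not merely the projected words themselves, but your swap $xy\mapsto yx$ (with $x<y$ non-consecutive, valid by Rule $E1$) changes $\vert\cdot\vert_{xy}$ by exactly one, so $\Psi_{\{x<y\}}$ applied to the two projections indeed yields different matrices.
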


In Table~\ref{151017b} we give more $4$-letter alphabet's counterexamples, and include, for each of their $M$\!-classes, the corresponding minimal Hamming distances. 

\begin{table}[h]
	\begin{center}
		\begin{tabular}{ | p{22em} | p{6em} | }
			\hline
			$M$-class & \tiny{minimal Hamming distance}\\ \hline
			$\{bcbabccdcbbabbccdccb, cbbabbccdccbabbcdcbc \} $& 12       \\ \hline
			$\{bcbabcbcdcbbabbccdccb, cbbabbcbcdccbabbcdcbc  \}$ & 14 \\ \hline
			$\{ bcbabcbcdcbbabcbccdccb, cbbabbcbcdccbabcbcdcbc \}$ & 16\\ \hline
					\end{tabular}
		\vspace{1mm}
		\caption{Counterexamples to Conjecture~\ref{160720a}}\label{151017b}
	\end{center}
\end{table}
 
These counterexamples further show that $M$-classes for the quaternary alphabet exhibit wilder behaviour compared to the ternary alphabet when it comes to minimal Hamming distances. This tendency somehow strengthens the connection in between minimal Hamming distance and the idea of $\mathbb{P}$-distinguishability, when considering our main results.
We leave as a future direction the study of possible minimal Hamming distances for the quaternary or higher alphabets. 
Here, we simply pose the following related question to which an unexpected negative answer would be beyond our expectations.

\begin{question}
	Is there a fixed upper bound on the minimal Hamming distance of any $M$\!-class for any ordered alphabet? 
\end{question} 
 
Finally, we point out that further extensions of the downward projections from~\cite{dick2021reducing} could follow the word projections model proposed in~\cite{serbanuta2004extending}. Such projections could take the form of {\em upward projections}, in which case further symbols are inserted in the corresponding words, or {\em word projections} via well chosen sequences. Such an approach could lead to a further ambiguity reduction, in the larger sense (when considered along Parikh matrices, and not only in terms of $\mathbb{P}$-distinguishability). 

Furthermore, such new projections could also express further connections to the idea of minimal Hamming distance and shed more light on the characteristics that these exhibit. We suspect that a notion, which is ``often mentioned but not much investigated in the literature" \cite[Page 196]{salomaa2005connections},  that might need further attention in the case of word projections is that of $t$-spectrum, which might favour our analysis of the transformation. 

\end{document}